\documentclass{amsart}
\linespread{1.2}
\usepackage{amsmath}
\usepackage{amsxtra}
\usepackage{amscd}
\usepackage{amsthm}
\usepackage{amsfonts}
\usepackage{amssymb}
\usepackage{mathdots}
\usepackage{bbm}
\usepackage{graphicx}
\usepackage{cite}
\usepackage[bookmarksnumbered, colorlinks, plainpages]{hyperref}
\usepackage[utf8]{inputenc}
\usepackage{color}
\usepackage{enumerate}
\newtheorem{theorem}{Theorem}[section]

\newtheorem{open}{Problem}[section]
\newtheorem{lemma}[theorem]{Lemma}

\theoremstyle{definition}

\newtheorem{remark}[theorem]{Remark}

\usepackage[paperwidth=17.10cm, paperheight=24.60cm, left=1.5cm, right=1.5cm, top=2cm, bottom=2cm]{geometry}

\setcounter{tocdepth}{3}

\usepackage{etoolbox}
\patchcmd{\section}{\scshape}{\large}{}{}
\patchcmd{\subsection}{\bfseries}{\normalfont}{}{}
\patchcmd{\subsubsection}{\itshape}{\normalfont}{}{}

\makeatletter
\def\@setauthors{%
	\begingroup
	\def\thanks{\protect\thanks@warning}%
	\trivlist
	\centering\footnotesize \@topsep30\p@\relax
	\advance\@topsep by -\baselineskip
	\item\relax\normalsize 
	\author@andify\authors
	\def\\{\protect\linebreak}%
	\authors%
	\ifx\@empty\contribs
	\else
	,\penalty-3 \space \@setcontribs
	\@closetoccontribs
	\fi
	\endtrivlist
	\endgroup
}
\def\@settitle{\begin{center}%
		\baselineskip14\p@\relax
		\bfseries\large
		\@title
	\end{center}%
}
\makeatother

\usepackage{tikz,xcolor,hyperref}

\definecolor{lime}{HTML}{A6CE39}
\DeclareRobustCommand{\orcidicon}{%
	\begin{tikzpicture}
		\draw[lime, fill=lime] (0,0) 
		circle [radius=0.16] 
		node[white] {{\fontfamily{qag}\selectfont \tiny ID}};
		\draw[white, fill=white] (-0.0625,0.095) 
		circle [radius=0.007];
	\end{tikzpicture}
	\hspace{-2mm}
}

\usepackage{titlesec}
\titleformat{\section}
{\normalfont\large\bfseries}
{\thesection}{1em}{}
\titleformat{\subsection}
{\normalfont\large\bfseries}
{\thesubsection}{1em}{}

\newcommand{\orcidSon}{\href{https://orcid.org/0000-0002-9560-6392}{\orcidicon}}

\begin{document}
	\title[On sums and products of diagonalizable matrices]{On sums and products of diagonalizable matrices over division rings}
	\author[Tran Nam Son]{Tran Nam Son\textsuperscript{1,2,\ddag}}
	\keywords{Matrix decomposition; Division ring; Diagonalizable matrix;  Waring problem}
	\subjclass[2020]{11P05; 12E15; 15A23; 16K40; 16S50}	
	
	\maketitle
	
	\begin{center}
		{\small 
			\textsuperscript{1}Faculty of Mathematics and Computer Science,\\ University of Science,\\ Ho Chi Minh City, Vietnam \\
			\textsuperscript{2}Vietnam National University,\\ Ho Chi Minh City, Vietnam\\
			\textsuperscript{\ddag}trannamson1999@gmail.com,\\\orcidSon \url{https://orcid.org/0000-0002-9560-6392}
		}
	\end{center}
	
	\begin{abstract}
This paper aims to continue the studies initiated by Botha in [\textit{Linear Algebra Appl.} \textbf{273} (1998), 65–82; \textit{Linear Algebra Appl.} \textbf{286} (1999), 37–44; \textit{Linear Algebra Appl.} \textbf{315} (2000), 1–23] by extending them to matrices over noncommutative division rings. In particular, we show that every such matrix can be written as either a sum or a product of two diagonalizable matrices. The number $2$ is not valid under mild conditions on the center, similar to those in Botha’s work on fields. By applying this result and other results obtained so far, we latter establish some Waring-type results for matrices.
	\end{abstract}
	

	\section{Introduction}\label{intro}
	
	Matrix diagonalization is a foundational topic in linear algebra, with numerous applications. It transforms a square matrix into a diagonal form through similarity transformations, where the non-zero entries are restricted to the main diagonal. This simplification makes operations like multiplication and exponentiation much easier. Diagonalization is particularly helpful in solving linear differential equations, finding energy eigenstates in quantum mechanics, and aiding transformations in computer graphics. Its ability to simplify complex problems makes it an invaluable tool across many fields. Even if a matrix is not diagonalizable, it is still possible to ``do the best one can" by finding a matrix with similar properties, where eigenvalues appear on the leading diagonal, and either ones or zeroes occupy the superdiagonal - a form known as the Jordan normal form. 
	
	On a different note, although some matrices may not be diagonalizable, it is interesting, as pointed out by Botha in \cite{Pa_Bo_98, Pa_Bo_99, Pa_Bo_20}, that every matrix over a field containing at least four elements can be expressed as a sum or product of two diagonalizable matrices. This result is particularly striking for two reasons: first, it shows that sums or products of diagonalizable matrices are sufficient to generate the entire matrix algebra. Second, it reveals that there exists a uniform bound on the number of summands or factors required. Moreover, this finding provides an avenue for addressing the modern Waring problem in various ways.
	
	The classical Waring problem concerns the representation of natural numbers as sums of $k$-powers, where $k$ is a positive integer. Modern versions explore similar questions in settings with noncommutative structures, meaning that the Waring problem, classically posed for integers, naturally considers to an algebra \( A \) over a field \( K \), where \( A \) need not be commutative. In the context of matrix algebras, the Matrix Waring Problem asks for the smallest integer \( m \) such that every element of the algebra \( A = \mathrm{M}_n(K) \) can be written as a sum of \( m \) \( k \)-th powers. This problem has led to numerous interesting results over time \cite{Pa_Ki_22,Pa_Ki_24,Pa_Ki_24_1,Pa_Ka_13 ,Pa_Ka_24}.

	The contribution of this paper is to further investigate Botha's results in the context of matrices over a division ring. In particular, we prove that every matrix over a noncommutative division ring can also be represented as either a sum or a product of two diagonalizable matrices. In this paper, we show that under certain conditions, specifically, for instance, considering radicable division rings, every matrix over the division ring can be expressed as a sum, product, or linear combination of two $k$-th powers. Moreover, this result provides insight into the study of images of diagonal polynomials, an area that has recently attracted considerable attention.
	
	For convenience, we briefly review some of the notations that will be used throughout the paper. Recall that a square matrix over a ring \( D \) is \textit{diagonalizable} if it can be expressed as similar to a diagonal matrix over \( D \).  In this paper, we will focus on division rings, and unless stated otherwise, \( D \) will refer to an arbitrary noncommutative division ring.    We use the standard notations \(\mathrm{M}_n(D)\) for the ring of \(n \times n\) matrices over a division ring \(D\) where $n$ is a positive integer, and \(\mathrm{GL}_n(D)\) for the general linear group of \(n \times n\) matrices over \(D\), respectively. Moreover, the symbol $\mathrm{M}_{t\times m}(D)$ denotes the set of all $t\times m$ matrices over $D$ where $t$ and $m$ are positive integers. Additionally, we write $\mathrm{LT}_n(D)$ and $\mathrm{UT}_n(D)$ as sets of lower and upper unitriangular matrices respectively, that is, $\mathrm{LT}_n(D)$ (resp., $\mathrm{UT}_n(D)$) consists of lower triangular matrices (resp., upper triangular matrices) whose diagonal entries are~$1$. Furthermore, $\mathrm{SL}_n(D)$ stands for the commutator subgroup of $\mathrm{GL}_n(D)$. For $a_1, \ldots, a_n \in D$, the notation $\mathrm{diag}(a_1, \ldots, a_n)$ refers to the diagonal matrix with entries $a_1, \ldots, a_n$ on its diagonal. 	As usual, $\mathrm{I}_n$ is the identity matrix in $\mathrm{M}_n(D)$. For $\lambda\in D$, the matrix $\lambda\mathrm{I}_n$ is the matrix $\mathrm{diag}(\lambda,\ldots,\lambda)\in\mathrm{M}_n(D)$.  A matrix in $\mathrm{M}_n(D)$ is a \textit{companion} matrix if it has the following form: $\begin{pmatrix}
		0& & &a_0 \\
		1& & &a_1\\
		&\ddots& &\vdots\\
		0& &1&a_{n-1}
	\end{pmatrix},$ where $a_0,a_1,\ldots,a_{n-1}\in D$. If $n=n_1+n_2+\ldots+n_t$ where $n_1,n_2,\ldots,n_t$ are positive integers, then a \textit{block diagonal matrix} \( A\in\mathrm{M}_n(D) \) has the form	$A =
	\begin{pmatrix}
		A_1 & 0 & \cdots & 0 \\
		0 & A_2 & \cdots & 0 \\
		\vdots & \vdots & \ddots & \vdots \\
		0 & 0 & \cdots & A_t
	\end{pmatrix},$ where \( A_i\in\mathrm{M}_{n_i}(D) \) for all \( i = 1, \ldots, t \) and \( 0 \) represents zero matrices of appropriate dimensions to match the block structure. This matrix is denoted as: $A = A_1 \oplus A_2 \oplus \cdots \oplus A_n \text{ or }A=\bigoplus_{i=1}^nA_i.$ If $A_i=A'$ for all $i=1,\ldots,t$, then we write $A$ as $\displaystyle A=\bigoplus_{n}A'$ for short. A matrix in $\mathrm{M}_n(D)$ is said to be \textit{traceless} if it has trace zero.

With what has been introduced so far, we proceed to discuss the main tools and outline the structure of the paper. The first topic, expressing matrices as sums and products of diagonalizable matrices, was previously studied by Botha in \cite{Pa_Bo_98,Pa_Bo_99,Pa_Bo_20}, and our work benefits from his contributions. We will follow Botha's approach by examining companion matrices. While it is well known that not all square matrices over a field are similar to a companion matrix, every square matrix over a field is similar to a block diagonal matrix  of companion matrices (see \cite[Theorem B-3.47]{Bo_Ro_15}). This result also holds for matrices over a division ring, though it may be less widely known. It is discussed in \cite[Chapter~3, Section~12, Page~50]{Bo_Ja_43} in the context of a single semi-linear transformation and later in \cite[Section 8.4, Page 505]{Bo_Co_85} by Cohn for pseudo-linear transformations. Additionally, it has been utilized by D. Ž. Djoković and J. G. Malzan in \cite{Pa_Djo_79}. Given this background, we aim to focus on decomposing companion matrices into sums or products of diagonalizable matrices.
	
	In the context of the Waring problem, understanding the base division ring is essential. For instance, in division rings whose multiplicative groups are radicable, every element can be expressed as a single \( k \)-th power. Hence, in this case, we investigate the Matrix Waring problem in the noncommutative setting, focusing on expressing matrices as sums of \( k \)-th powers. A natural question that arises in this study is whether the center of a radicable division ring must be infinite. To address this, we construct a counterexample. This construction benefits from the work of P. M. Cohn concerning EC-division rings. In particular, let \( K \) be a division ring of positive characteristic, and choose \( k \) to be a finite field contained in the center of \( K \). According to Cohn’s book, there exists an EC-division ring \( D \) that contains \( K \). Furthermore, for any positive integer \( m \) and any \( a \in D \), the equation $x^m - a = 0$ always has a solution in some extension of \( D \). Since \( D \) is an EC-division ring, such a solution must also exist within \( D \) itself. Notably, the center of \( D \) is precisely \( k \), providing the desired example. We sincerely thank a Mathematics Stack Exchange user in Question 5030291 for drawing our attention to this result.

In summary, the key components of the proofs in this paper include the theory of expressing matrices via diagonalizable matrices, the theory of polynomials over division rings, several commutativity theorems in division rings, the classification theory of division rings, and the theory of rational canonical forms of matrices over division rings, among others. While it is unsurprising that these theories play a valuable role in our study, the approach of integrating them within the proofs seems to be new.

	The organization of this paper, along with a selection of its results, is presented below:  Although Botha's work initially emphasizes products before sums, we, by our preference, choose to focus on sums before products. Accordingly, Section~\ref{sum} is dedicated to the decomposition of matrices over $D$ into sums of diagonalizable matrices over $D$. We show in Theorem~\ref{sum1} that every matrix over \( D \) can be written as a sum of two diagonalizable matrices over \( D \), provided the center of \( D \) contains at least three elements. Clearly, the number $2$ is the smallest possible value. When the center of \( D \) is the field of two elements, we prove in Theorem~\ref{sum2} that every matrix over \( D \) can be expressed as a sum of three diagonalizable matrices over \( D \), with the number $3$ being the minimal value. Additionally, we aim to find a necessary and sufficient condition for a matrix over \( D \) to be written as a sum of two diagonalizable matrices over \( D \). With this goal in mind, although we have not yet found the desired condition, we provide some insight into the problem by showing in Theorem~\ref{sum3} that every nilpotent matrix over \( D \) can indeed be expressed as a sum of two diagonalizable matrices over \( D \), regardless of the condition of the center of $D$.
	
	Following this, Section~\ref{product} shifts the focus to matrix decomposition as  products of diagonalizable matrices. This scenario introduces more complications, leading to a division into three subsections, depending on the characteristic of $D$ and the number of elements in its center (see Theorems~\ref{cha khac 2}, \ref{cha 2}, \ref{nil2} and \ref{enough}). To assist the readers and make referencing the next section easier, we finish this section with a concise summary in Theorem~\ref{section diagonalizable} of the results presented so far. 
	
	The final section, Section~\ref{Waring section}, focuses on the Waring problem. We begin our discussion with radicable division rings in Subsection~\ref{radicable}. By definition, every element of such a division ring can be expressed as a $k$-power for any positive integer $k$. Building on the results established in previous sections, we prove in Theorem~\ref{diagonal map} that every matrix over a radicable division ring with infinite center can be written as either a sum or a product of two $k$-powers. Furthermore, we highlight that the infiniteness of the center in this setting is a nontrivial property. While any radicable division ring that is finite-dimensional over its center must have the infinite center, we provide an illustrative example, as mentioned above in the first part of this subsection, to demonstrate this point. Leveraging this result, we further establish in Theorem~\ref{dia1} that every matrix can also be expressed as a linear combination of two $k$-powers, where the two scalar coefficients are arbitrary but nonzero. As a consequence, we deduce that the map induced by a diagonal polynomial in $m$ noncommuting variables is surjective for all $m \geq 2$. We have also refined certain results related to the Waring problem for groups in the setting of twisted group algebras. Specifically, in Theorem~\ref{twisted}, we show that every element in $\mathcal{U}(F^\tau G)'$, the derived subgroup of the unit group of the twisted group algebra $\mathcal{U}(F^\tau G)$, can be expressed as a product of two square elements whenever $G$ is a locally finite group and the base field $F$ has characteristic zero, provided that $F$ is either algebraically closed or real-closed.

In Subsection~\ref{algebraically}, we turn our attention to a specific class of radicable division rings, namely algebraically closed division rings. Here, we adopt the notion in the sense of Niven, Baer, and Jacobson, meaning that every one-sided polynomial equation in a central variable has a root. Our first main result in this direction, Theorem~\ref{central x}, establishes that if \( p \) is a nonconstant polynomial in the central variable \( x \) with coefficients in \( F \), the center of an algebraically closed division ring \( D \), then the image of \( p \) on \( D \) coincides with \( D \), i.e., \( p(D) = D \).  Building on this, we further show in Theorem~\ref{al} that if \( p \) is a polynomial in noncommuting variables \( x_1, \dots, x_m \) with coefficients in \( F \), where \( m \) is a positive integer, such that \( p \) has zero constant term and \( p(F) \neq \{0\} \), then \( p(D) = D \). Leveraging the results obtained thus far, we investigate the Waring problem in this setting. Specifically, in Theorems~\ref{al1} and~\ref{allll}, we establish that every matrix over an algebraically closed division ring \( D \) with infinite center \( F \) can be written as either a sum, a product, or a linear combination of two elements from \( p(\mathrm{M}_n(D)) \), where the two scalar coefficients are chosen arbitrarily in \( F \setminus \{0\} \). Furthermore, in the absence of the assumption that \( p(F) \neq \{0\} \), we show that if \( F \) has characteristic zero and \( \dim_F D < \infty \), and if \( p \) is not cyclically equivalent to an identity of \( \mathrm{M}_n(F) \), then every matrix in \( \mathrm{M}_n(D) \) can be expressed as a linear combination of five elements from \( p(\mathrm{M}_n(D)) \).  Additionally, Theorem~\ref{al1} extends certain results of M. Brešar and P. Šemrl \cite{Pa_Bre_23, Pa_Bre_23_1} to the noncommutative setting, incorporating modified assumptions as outlined in Theorem~\ref{al1}, while Theorem~\ref{allll} follows from \cite[Corollary~4.5]{Pa_Bre_23} and a description of division rings. Here, we say that two polynomials \( f \) and \( g \) in noncommuting variables are \textit{cyclically equivalent} if the difference \( f - g \) can be expressed as a sum of commutators.

	\section{Sums of diagonalizable matrices}\label{sum}
	
	In 2000, Botha investigated in \cite{Pa_Bo_20} the decomposition of matrices into sums of diagonalizable matrices and established an intriguing result: any matrix over a field with at least three elements can be written as a sum of two diagonalizable matrices. In the remaining case with only two elements, every matrix over such a field can be expressed as a sum of three diagonalizable matrices. Moreover, a matrix  can be expressed as a sum of two diagonalizable matrices if and only if it is similar to a matrix of the form  
	\[ 
	N \oplus \begin{pmatrix} 
		0 & X \\ 
		\mathrm{I} & 0 
	\end{pmatrix} \oplus (\mathrm{I} + M), 
	\]  
	where \( N \) and \( M \) are nilpotent, \( X \) and \( X + \mathrm{I} \) are nonsingular, and \( \mathrm{I} \) represents the identity matrix of suitable size.
	
	 In this section, we extend this result by addressing noncommutative division rings instead of fields. 	
	 To begin, we present the following remark, inspired by \cite[Lemma 1.1]{Pa_Bo_98}, which can be easily verified.
	 
	 \begin{remark} \label{remark di}
	 	\begin{enumerate}[\rm (i)]
	 		\item If \(A \in \mathrm{M}_n(D)\) is diagonalizable and similar to \(B \in \mathrm{M}_n(D)\), then \(B\) must also be diagonalizable.
	 		\item If $n=n_1+n_2+\ldots+n_k$ where $k\geq1$ is an integer and  $$A_1\in\mathrm{M}_{n_1}(D), A_2\in\mathrm{M}_{n_2}(D), \ldots,A_k\in\mathrm{M}_{n_k}(D)$$ are diagonalizable, then the diagonal block matrix $$A_1\oplus A_2\oplus\cdots\oplus A_k=\begin{pmatrix}A_1&0&\cdots&0\\0&A_2&\cdots&0\\\vdots &\vdots&\ddots&\vdots\\ 0&0&\cdots &A_k\end{pmatrix}\in \mathrm{M}_n(D)$$ is also diagonalizable.
	 	\end{enumerate}
	 \end{remark}

Without adopting Botha’s approach in \cite{Pa_Bo_20}, we can establish the expected result under the assumption that the center of the division ring contains sufficiently many elements. While we will later prove this result without imposing this condition, we present it here to introduce additional ideas that may be of interest to the reader.

Let \( D \) be a division ring with center \( F \), where \( F \) contains sufficiently many elements, and let \( n > 1 \) be an integer. It is straightforward to verify that any central matrix in \( \mathrm{M}_n(D) \) takes the form \( \lambda \mathrm{I}_n \) for some \( \lambda \in F \), making it trivially expressible as a sum of two diagonalizable matrices. Moreover, \cite[Proposition~1.8]{Pa_AmRo_94} establishes that any noncentral matrix in \( \mathrm{M}_n(D) \) is similar to a matrix whose main diagonal consists of \( 0, \dots, 0, d \) for some \( d \in D \). Notably, while \cite{Pa_AmRo_94} assumes that \( D \) is finite-dimensional over its center, the proof of \cite[Proposition~1.8]{Pa_AmRo_94} itself does not rely on this assumption.  

Now, let \( A \in \mathrm{M}_n(D) \) be a noncentral matrix. By \cite[Proposition~1.8]{Pa_AmRo_94}, there exists an invertible matrix \( P \in \mathrm{M}_n(D) \) such that \( P^{-1} A P \) has diagonal entries  
\[
\underbrace{0,0,\dots,0,}_{(n-1) \text{ times}} d
\]  
for some \( d \in D \). We then decompose \( P^{-1} A P \) as  
\[
P^{-1} A P = B + C,
\]  
where \( B \) is a lower triangular matrix with zero diagonal entries, and \( C \) is an upper triangular matrix whose diagonal consists of  
\[
\underbrace{0,0,\dots,0,}_{(n-1) \text{ times}} d.
\]  

Since the center \( F \) of \( D \) contains sufficiently many elements, we can choose distinct elements \( \alpha_1, \alpha_2, \dots, \alpha_{n-1} \in F \) and define  
\[
K = \mathrm{diag}(\alpha_1, \alpha_2, \dots, \alpha_{n-1},0).
\]  
This allows us to rewrite the decomposition as  
\[
P^{-1} A P = (B+K) + (-K+C).
\]  
By \cite[Lemma~2.1]{Pa_DuSo_25}, the matrix \( B + K \) is similar to \( K \). However, the presence of \( d \) in \( -K+C \) prevents us from applying \cite[Lemma~2.1]{Pa_DuSo_25} directly. Following a similar argument in the proof of \cite[Lemma~2.1]{Pa_DuSo_25}, we can show that \( -K+C \) is similar to  
\[
\mathrm{diag}(-\alpha_1, -\alpha_2, \dots, -\alpha_{n-1}, d),
\]  
provided that \( \alpha_1, \alpha_2, \dots, \alpha_{n-1}, d \) are distinct. This can indeed be ensured due to the abundance of elements in the center of \( D \). Thus, the proof is complete.  

In the following subsections, we proceed without assuming that the center of \( D \) contains sufficiently many elements.

	\subsection{Division rings with center containing at least three elements}
	
	The subsequent key step of Botha is to determine whether the matrix $\begin{pmatrix}
		B & \alpha \\
		0 & a
	\end{pmatrix}$ is similar to the matrix $\begin{pmatrix}
		B & 0 \\
		0 & a
	\end{pmatrix},$ where \( B \in \mathrm{M}_n(D), \alpha\in\mathrm{M}_{n \times 1}(D) \),  and \( a \in D \). To approach this question, we start as follows. If there exists $y\in\mathrm{M}_{n \times 1}(D)$ such that $$\begin{pmatrix}
		\mathrm{I}_n&y\\
		0&1
	\end{pmatrix}\begin{pmatrix}
		B&\alpha\\
		0&a
	\end{pmatrix} \begin{pmatrix}\mathrm{I}_n&y\\
		0&1
	\end{pmatrix}^{-1}=\begin{pmatrix}
		B&0\\
		0&a
	\end{pmatrix},$$ then $\alpha=By-ya$. Therefore, the question reduces to whether there exists  $y\in\mathrm{M}_{n \times 1}(D)$   such that  $\alpha=By-ya$. This equation can be viewed as  the Sylvester equation in the context of matrices over a division ring.  A result of Bolotnikov in \cite[Lemma 3.1]{Bo_Bo_22} provides a sufficient condition for solving this equation, stated as follows. Although the statement originally provides the solutions explicitly, we omit them here as they are not needed for our purposes.

	\begin{lemma} {\rm \cite[Lemma 3.1]{Bo_Bo_22}} \label{lem Bo}
Let \( A \in \mathrm{M}_n(D) \) and \( B \in \mathrm{M}_m(D) \), where \( n \) and \( m \) are positive integers. Suppose there exists a polynomial \( p \) in one variable with coefficients in the center of \( D \) such that \( p(A) = 0 \) and \( p(B) \) is invertible. Then, for any \( C \in \mathrm{M}_{n \times m}(D) \), the Sylvester equation  
\[
AX - XB = C
\]
has a unique solution \( X \in \mathrm{M}_{n \times m}(D) \).
	\end{lemma}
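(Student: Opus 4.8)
The plan is to reformulate the Sylvester equation as the inversion of a single additive operator and then exhibit an explicit inverse built out of $p$. Define
\[
\mathcal{L}\colon \mathrm{M}_{n\times m}(D)\longrightarrow \mathrm{M}_{n\times m}(D),\qquad \mathcal{L}(X)=AX-XB,
\]
an additive map which is linear over the center $F$ of $D$; solving $AX-XB=C$ uniquely for every $C$ is exactly the assertion that $\mathcal{L}$ is bijective. Write $L_A(X)=AX$ and $R_B(X)=XB$, so that $\mathcal{L}=L_A-R_B$. The two elementary facts I would record first are: (1) $L_A$ and $R_B$ commute, since $L_AR_B(X)=AXB=R_BL_A(X)$; and (2) because $p$ has coefficients in $F$, one has $q(L_A)=L_{q(A)}$ and $q(R_B)=R_{q(B)}$ for every $q\in F[t]$ — this is precisely where centrality of the coefficients enters, allowing scalars to be pulled across matrices freely. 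In particular $p(L_A)=L_{p(A)}=0$, while $p(R_B)=R_{p(B)}$ is invertible, with inverse $R_{p(B)^{-1}}$.

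Next I would carry out the key factorization. In the \emph{commutative} polynomial ring $F[t,s]$ one has $t^i-s^i=(t-s)\sum_{j=0}^{i-1}t^{i-1-j}s^{j}$, hence, writing $p(t)=\sum_i c_it^i$ with $c_i\in F$,
\[
p(t)-p(s)=(t-s)\,h(t,s),\qquad h(t,s):=\sum_i c_i\sum_{j=0}^{i-1}t^{i-1-j}s^{j}.
\]
Since $L_A$ and $R_B$ commute, the evaluation map $F[t,s]\to \operatorname{End}_F\big(\mathrm{M}_{n\times m}(D)\big)$ sending $t\mapsto L_A$, $s\mapsto R_B$ is a well-defined ring homomorphism, and applying it to the identity above gives
\[
p(L_A)-p(R_B)=(L_A-R_B)\,h(L_A,R_B)=h(L_A,R_B)\,(L_A-R_B).
\]
The left-hand side equals $0-R_{p(B)}=-R_{p(B)}$, which is invertible. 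Hence $\mathcal{L}=L_A-R_B$ admits the two-sided inverse $-\,h(L_A,R_B)\,R_{p(B)^{-1}}=-\,R_{p(B)^{-1}}\,h(L_A,R_B)$, so $\mathcal{L}$ is bijective; existence and uniqueness of $X$ follow at once, and unwinding $h$ yields the explicit solution.

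The only genuine obstacle is the noncommutativity of $D$, and it is confined to two checkpoints that must be handled carefully: the divided-difference factorization $p(t)-p(s)=(t-s)h(t,s)$ is a statement in a \emph{commutative} ring and may be transported to the operator level only because $L_A$ and $R_B$ commute — the entries of $A$, $B$, $X$ certainly need not — and the identities $p(L_A)=L_{p(A)}$, $p(R_B)=R_{p(B)}$ genuinely require the coefficients of $p$ to lie in $F$. I would also flag that when $D$ is infinite-dimensional over its center the space $\mathrm{M}_{n\times m}(D)$ is infinite-dimensional over $F$, so one cannot fall back on ``injective $\Rightarrow$ surjective''; producing the explicit inverse above avoids this issue entirely.
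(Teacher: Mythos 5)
Your proof is correct. Note, however, that the paper does not prove this lemma at all: it imports it verbatim from Bolotnikov \cite[Lemma 3.1]{Bo_Bo_22} (and remarks that it is a special case of Cohn's result \cite[Lemma 2.3]{Pa_Co_73}), explicitly omitting the explicit solution formula. So there is no in-paper argument to compare against; what you have done is reconstruct the standard proof that underlies Bolotnikov's statement. Your key steps all check out: $L_A$ and $R_B$ commute; centrality of the coefficients gives $p(L_A)=L_{p(A)}=0$ and $p(R_B)=R_{p(B)}$ with inverse $R_{p(B)^{-1}}$ (centrality is genuinely needed for the $R_B$ identity, to move the $c_i$ past $X$, and to make $L_A,R_B$ act $F$-linearly so the evaluation map $F[t,s]\to\operatorname{End}_F(\mathrm{M}_{n\times m}(D))$ is a ring homomorphism); the divided-difference factorization then yields $(L_A-R_B)h(L_A,R_B)=h(L_A,R_B)(L_A-R_B)=-R_{p(B)}$, producing a two-sided inverse of $\mathcal{L}$ (note also that $R_{p(B)^{-1}}$ commutes with $h(L_A,R_B)$ since $B$ commutes with $p(B)^{-1}$, or simply invoke that a left and a right inverse coincide). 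Your explicit inverse $-h(L_A,R_B)R_{p(B)^{-1}}$ recovers exactly the explicit solution the paper chose not to reproduce, and your closing remark about not relying on ``injective implies surjective'' is apt, since $\mathrm{M}_{n\times m}(D)$ may indeed be infinite-dimensional over $F$.
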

	
	Lemma~\ref{lem Bo} is a specialized case of a more general result established by P. M. Cohn in \cite[Lemma 2.3]{Pa_Co_73} for algebras over a common field. Given our focus on matrices, we prefer to cite the recent result of Bolotnikov.
	
	With Lemma~\ref{lem Bo} in place, we now proceed to appropriately select the matrices \( B \) and \( a \). For our purpose, we now combine the previous observation with Lemma~\ref{lem Bo}, stating it as follows.

	\begin{lemma}\label{cheo khoi}
		Let \( B \in \mathrm{M}_n(D), \alpha\in\mathrm{M}_{n \times 1}(D) \),  and \( a \in D \).	If there exists a polynomial \( p(t) \) with coefficients in the center of \( D \) such that  such that \( p(B) = 0 \) and  \( p(a) \) is invertible, then the matrix $\begin{pmatrix}
			B & \alpha \\
			0 & a
		\end{pmatrix}$ is similar to the matrix $\begin{pmatrix}
			B & 0 \\
			0 & a
		\end{pmatrix}.$
	\end{lemma}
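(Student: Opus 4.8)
The plan is to reduce the similarity statement to the solvability of a Sylvester equation and then invoke Lemma~\ref{lem Bo}. Concretely, I would look for a conjugating matrix of the special unitriangular form $\begin{pmatrix}\mathrm{I}_n & y\\ 0 & 1\end{pmatrix}$ with $y\in\mathrm{M}_{n\times 1}(D)$, since the preceding discussion already records that such a conjugation sends $\begin{pmatrix}B & \alpha\\ 0 & a\end{pmatrix}$ to $\begin{pmatrix}B & 0\\ 0 & a\end{pmatrix}$ precisely when $\alpha = By - ya$. So the first step is simply to write out this conjugation explicitly and observe that the off-diagonal block of the conjugated matrix is $By - ya + \alpha$ (the diagonal blocks $B$ and $a$ are manifestly unchanged, and the bottom-left block stays $0$).

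The second step is to rewrite the requirement $\alpha = By - ya$ as the Sylvester equation $By - ya = \alpha$, i.e.\ in the form $AX - XB = C$ of Lemma~\ref{lem Bo} with the roles played by $B$ (an $n\times n$ matrix), the scalar $a\in D\cong\mathrm{M}_1(D)$, and $C=\alpha\in\mathrm{M}_{n\times 1}(D)$. The hypothesis of the present lemma is exactly the hypothesis of Lemma~\ref{lem Bo}: there is a polynomial $p(t)$ with coefficients in the center of $D$ such that $p(B)=0$ while $p(a)$ is invertible (note $p(a)$ is invertible in $D$ iff it is invertible as a $1\times 1$ matrix). Hence Lemma~\ref{lem Bo} guarantees a solution $y\in\mathrm{M}_{n\times 1}(D)$.

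Finally, I would assemble the pieces: with this $y$ in hand, the matrix $\begin{pmatrix}\mathrm{I}_n & y\\ 0 & 1\end{pmatrix}$ is invertible (its inverse is $\begin{pmatrix}\mathrm{I}_n & -y\\ 0 & 1\end{pmatrix}$), and the computation from the first step shows it conjugates $\begin{pmatrix}B & \alpha\\ 0 & a\end{pmatrix}$ to $\begin{pmatrix}B & 0\\ 0 & a\end{pmatrix}$, giving the claimed similarity.

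There is essentially no obstacle here, since the hard analytic content — solvability of the Sylvester equation under a polynomial-annihilator condition — has been outsourced to Lemma~\ref{lem Bo}. The only mild point to keep straight is the noncommutativity: one must be careful that $By - ya$ (rather than $By - ay$ or $yB - ya$) is the correct expression arising from the block conjugation, and that scalars from $D$ are being multiplied on the correct side; but this is routine block-matrix bookkeeping. I would therefore present the proof in a few lines: state the candidate conjugator, compute the conjugate, identify the resulting equation with the Sylvester equation, and cite Lemma~\ref{lem Bo}.
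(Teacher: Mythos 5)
Your proposal is correct and takes essentially the same route as the paper: conjugate by the block-unitriangular matrix $\begin{pmatrix}\mathrm{I}_n & y\\ 0 & 1\end{pmatrix}$, reduce the vanishing of the corner block to the Sylvester equation in $y$, and solve it by applying Lemma~\ref{lem Bo} with the $1\times 1$ block $(a)$, where invertibility of $p(a)$ in $D$ is exactly the needed hypothesis. The only slip is a harmless sign: with your conjugation the corner block is $By-ya+\alpha$, so the equation to solve is $By-ya=-\alpha$ rather than $\alpha=By-ya$, which is immaterial since Lemma~\ref{lem Bo} provides a solution for an arbitrary right-hand side.
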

	
	For the time being, we will set this aside and return to it later.   Now, let us move on to the next step in the proof. As mentioned earlier, we shall use the rational canonical form for matrices over \( D \) as described in \cite[Chapter 3, Section 12, Page 50]{Bo_Ja_43} and \cite[Section 8.4, Page 505]{Bo_Co_85}, which is outlined below.
	
	\begin{lemma}\label{companion}
		Every square matrix over $D$ is similar to a block diagonal matrix  of companion matrices over $D$.	
	\end{lemma}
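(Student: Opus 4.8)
The statement to prove is Lemma~\ref{companion}: every square matrix over a division ring $D$ is similar to a block diagonal matrix of companion matrices over $D$. The plan is to reduce this to the structure theory of finitely generated modules over a (noncommutative) principal ideal domain, namely a skew polynomial ring. First I would observe that a matrix $A \in \mathrm{M}_n(D)$ turns the left vector space $V = D^n$ into a right module over the skew polynomial ring $R = D[t; 1]$ — that is, ordinary polynomials in a central indeterminate $t$, where $t$ acts on column vectors by $v \cdot t = Av$. (One must be a little careful about sides: since scalars multiply on one side and $A$ acts $D$-linearly, $t$ commutes with the $D$-action, so $R$ here is genuinely the ordinary polynomial ring $D[t]$ with $t$ central over $D$; this is exactly the setting of \cite[Chapter 3, Section 12]{Bo_Ja_43} specialized to the identity automorphism.) The ring $R = D[t]$ is a principal ideal domain — left and right — because $D$ is a division ring and the usual Euclidean division algorithm by a polynomial goes through on both sides.

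Next I would invoke the structure theorem for finitely generated torsion modules over a noncommutative PID: $V$, being finite-dimensional over $D$, is a torsion $R$-module, hence decomposes as a direct sum $V \cong \bigoplus_{i=1}^{t} R/q_i R$ for suitable monic polynomials $q_i \in R$ (the invariant factors, or one may instead take the primary/elementary-divisor decomposition — either works for this conclusion). This is precisely the content cited from \cite[Section 8.4, Page 505]{Bo_Co_85} for pseudo-linear transformations, of which our case is the simplest instance. Then for each cyclic summand $R/qR$ with $\deg q = n_i$, the classes of $1, t, t^2, \dots, t^{n_i-1}$ form a $D$-basis, and in this basis the action of $t$ — equivalently, the restriction of $A$ to that summand — is given exactly by the companion matrix of $q$. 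Reassembling these bases into a basis of $V$ shows $A$ is similar (over $D$) to the block diagonal matrix $\bigoplus_{i=1}^{t} C(q_i)$ of the corresponding companion matrices, which is the claim.

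The main obstacle — really the only delicate point — is making sure the noncommutative bookkeeping is correct: the polynomial ring acts on one side while the division ring acts on the other, and one has to check that left/right PID properties and the left/right module structure are compatible so that the classical decomposition argument (and the identification of the cyclic-module action with a companion matrix) transfers verbatim. Since $t$ is taken central over $D$, these side issues are in fact mild, and the references \cite{Bo_Ja_43} and \cite{Bo_Co_85} already carry out the general pseudo-linear version; thus the honest approach is simply to cite them, as the excerpt does, rather than to reprove the module structure theorem. For completeness one might include a one-line remark that the companion matrix displayed in the introduction is the matrix of multiplication by $t$ on $R/qR$ with respect to the power basis, so that the translation from the module-theoretic statement to the matrix statement is transparent.
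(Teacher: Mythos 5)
Your proposal is correct and follows essentially the same route as the paper, which gives no proof of Lemma~\ref{companion} but simply cites Jacobson and Cohn for the rational canonical form; your module-theoretic sketch (viewing $D^n$ as a finitely generated torsion module over the left-and-right PID $D[t]$, decomposing it into cyclic summands $R/qR$, and reading off the companion matrix of $q$ in the power basis) is exactly the standard argument behind those citations, and your conclusion that citing them is the honest course matches what the paper does. The only quibble is the side bookkeeping: for the central $t$ to act via $v \mapsto Av$ on column vectors compatibly, the $D$-scalars must act on the side opposite to $A$, so $D^n$ should be taken as a \emph{right} $D$-vector space made into a right $D[t]$-module rather than a left one --- a slip you yourself flag and which does not affect the argument.
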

	
	With Lemma~\ref{companion} and Remark~\ref{remark di} in mind, we now turn our attention to decomposing companion matrices into sums of diagonalizable matrices. Motivated by the proof of \cite[Lemma 2.1]{Pa_Bo_20}, we begin with a series of observations. Note that $D$ is always assumed to be a noncommutative division ring, unless stated otherwise.
	
	For $x\in D\setminus\{0\}$, let
	$$G_n(x)=\begin{cases}
		\displaystyle	\bigoplus_{\frac{n}{2}}\begin{pmatrix}
			0&0\\1&-x
		\end{pmatrix} \text{ if } n \text{ is even},\\ \displaystyle
		\left(\bigoplus_{\frac{n-1}{2}}\begin{pmatrix}
			0&0\\1&-x
		\end{pmatrix}\right)\oplus(0) \text{ if } n\geq3 \text{  is odd},
	\end{cases}$$ and $$H_n(x)=\begin{cases}
		\displaystyle (0)\oplus\left(\bigoplus_{\frac{n-2}{2}}\begin{pmatrix}
			x&0\\1&0
		\end{pmatrix}\right)\oplus(x) \text{ if } n \text{ is even},\\ \displaystyle (0)\oplus\left(\bigoplus_{\frac{n-1}{2}}\begin{pmatrix}
			x&0\\1&0
		\end{pmatrix}\right) \text{ if } n\geq3 \text{  is odd}.
	\end{cases}$$ As we know, we will decompose companion matrices into sums of diagonalizable matrices. Let $$A=\begin{pmatrix}
		0& & &a_0 \\
		1& & &a_1\\
		&\ddots& &\vdots\\
		0& &1&a_{n-1}
	\end{pmatrix}\in \mathrm{M}_n(D),$$ where $a_0,a_1,\ldots,a_{n-1}\in D$. The case where \( n = 1 \) is trivial, so we restrict ourselves to the case where \( n \geq 2 \). If $n=2$, then \begin{eqnarray*}
		A&=&\begin{pmatrix}
			0&a_0\\
			1&a_1
		\end{pmatrix}=\begin{pmatrix}
			0&0\\
			1&a_1-d
		\end{pmatrix}+\begin{pmatrix}
			0&a_0\\
			0&d
		\end{pmatrix}\\
		&=&\begin{pmatrix}
			1&0\\ (d-a_1)^{-1}&1
		\end{pmatrix} \begin{pmatrix}
			0&0\\0&a_1-d
		\end{pmatrix} \begin{pmatrix}
			1&0\\ (d-a_1)^{-1}&1
		\end{pmatrix}^{-1}\\
		&&+\begin{pmatrix}
			1&a_0d^{-1}\\
			0&1
		\end{pmatrix} \begin{pmatrix}
			0&0\\0&d
		\end{pmatrix}\begin{pmatrix}
			1&a_0d^{-1}\\
			0&1
		\end{pmatrix}^{-1}
	\end{eqnarray*} is a sum of two diagonalizable matrices over $D$, where $d\in D\setminus\{a_1,0\}$, which completes the case where $n=2$. Note that the element \( d \) can certainly be chosen. In fact, recalling that \( D \) is a noncommutative division ring.  A celebrated theorem of Wedderburn, as stated in \cite[(13.1) Wedderburn's ``Little" Theorem, p. 203]{Bo_La_91}, tells us that every finite division ring is a field. Hence, $D$ must be infinite, and selecting such an element \( d \) is possible. By the way, since this theorem is frequently used throughout the paper, we state it here for convenience. Although it may not perfectly align with the content of this subsection, its placement here is logically appropriate.
	
	\begin{theorem} \label{Wedderburn}
		Every finite division ring is a field.
	\end{theorem}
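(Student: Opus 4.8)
The plan is to deploy the classical class-equation argument, the only nontrivial input being an elementary size estimate on values of cyclotomic polynomials. Let $D$ be a finite division ring with center $Z$. Since $Z$ is a commutative finite division ring, it is a finite field, say $|Z| = q$ with $q \geq 2$, and since $D$ is a finite-dimensional left vector space over $Z$ we get $|D| = q^{n}$ for some integer $n \geq 1$. The entire goal is to prove that $n = 1$, for then $D = Z$ is a field.

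First I would analyze centralizers. For $x \in D$ put $C_D(x) = \{y \in D : xy = yx\}$; this is a sub-division-ring of $D$ containing $Z$, so $|C_D(x)| = q^{d(x)}$ for some positive integer $d(x)$, and since $D$ is a left vector space over $C_D(x)$, the order $q^{n}$ is a power of $q^{d(x)}$, forcing $d(x) \mid n$; moreover $d(x) < n$ whenever $x \notin Z$. Next I would write the class equation for the finite group $D^{*} = D \setminus \{0\}$, whose center is $Z \setminus \{0\}$ of order $q - 1$ and whose noncentral conjugacy classes have sizes $|D^{*}| / |C_D(x) \setminus \{0\}|$:
\[
q^{n} - 1 \;=\; (q - 1) \;+\; \sum_{i} \frac{q^{n} - 1}{q^{d_i} - 1},
\]
the sum running over representatives of the noncentral classes, with each $d_i \mid n$ and $d_i < n$.

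The arithmetic core is the cyclotomic polynomial $\Phi_n$. From the factorization $t^{n} - 1 = \prod_{d \mid n} \Phi_d(t)$ one sees that for every proper divisor $d$ of $n$ the polynomial $\Phi_n(t)$ divides $(t^{n} - 1)/(t^{d} - 1)$ in $\mathbb{Z}[t]$; hence $\Phi_n(q)$ divides each summand $(q^{n} - 1)/(q^{d_i} - 1)$ as well as $q^{n} - 1$, and plugging this into the class equation forces $\Phi_n(q) \mid q - 1$. On the other hand $\Phi_n(q) = \prod_{\zeta} (q - \zeta)$, the product being over the primitive $n$-th roots of unity $\zeta$, and if $n > 1$ each factor satisfies $|q - \zeta| > q - 1$ (geometrically, the real point $q \geq 2$ is strictly farther from any point of the unit circle other than $1$ than it is from $1$). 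Thus $|\Phi_n(q)| > q - 1 \geq 1$, which is incompatible with $\Phi_n(q) \mid q - 1$. Therefore $n = 1$ and $D = Z$.

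The step I would be most careful about is this last inequality: writing $\zeta = \cos\theta + i\sin\theta$ with $\theta \not\equiv 0 \pmod{2\pi}$, one has $|q - \zeta|^{2} = q^{2} - 2q\cos\theta + 1 > q^{2} - 2q + 1 = (q - 1)^{2}$, so the estimate is genuinely elementary. The remaining ingredients, namely the multiplicativity of dimension through the tower $Z \subseteq C_D(x) \subseteq D$, the orbit-stabilizer count for the conjugation action of $D^{*}$ on itself, and the divisibility relations among the $\Phi_d$ coming from $t^{n} - 1 = \prod_{d \mid n} \Phi_d(t)$, are all routine, and no genuinely noncommutative difficulty arises beyond the observation that centralizers in a division ring are again division rings.
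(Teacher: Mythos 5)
Your proof is correct: it is the classical Witt argument via the class equation of $D^{*}$ and the divisibility/size estimate for $\Phi_n(q)$, and all the steps (centralizers being sub-division-rings, $d(x)\mid n$, $\Phi_n(q)\mid(q^n-1)/(q^{d_i}-1)$, and $|\Phi_n(q)|>q-1$ for $n>1$) are carried out soundly. The paper itself offers no proof, simply quoting the result from Lam's book, and your argument is essentially the standard proof given in that cited source, so there is nothing to reconcile.
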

	
	We now turn our attention to the case where \( n \geq 3 \). In this case, our proof follows from certain ideas inspired by Botha, through the following four decompositions, which distinguish between the cases where \( n \) is even and odd, and between the cases where \( a_{n-1} \) is zero and nonzero. First, note that \({}^T\) is used to denote the transpose operation, and let $$v = \begin{pmatrix}
		a_0&a_1&\cdots&a_{n-2}
	\end{pmatrix}^T,u=\begin{pmatrix}
		0&0&\cdots&0&1
	\end{pmatrix}^T \in \mathrm{M}_{(n-1) \times 1}(D).$$ Later, we will consider the following four cases, under the assumption that the center of \(D\) contains at least three elements.
	
	\medskip
	
	\noindent\textbf{Case 1.} $n\geq3$ is odd and $a_{n-1}=0$. Then, $$A=\begin{pmatrix}
		G_{n-1}(x)&v\\
		0&-1
	\end{pmatrix}+\begin{pmatrix}
		H_{n-1}(x)&0\\
		u&1
	\end{pmatrix}.$$ Note that the second summand \(\begin{pmatrix} H_{n-1}(x) & 0 \\ u & 1 \end{pmatrix}\) is exactly the block diagonal matrix
	\[
	(0) \oplus \left( \bigoplus_{\frac{n-3}{2}} \begin{pmatrix} x & 0 \\ 1 & 0 \end{pmatrix} \right) \oplus \begin{pmatrix} x & 0 \\ 1 & 1 \end{pmatrix}.
	\] Moreover, it is easy to verify that the following matrices are diagonalizable:\[
	\begin{pmatrix}x & 0 \\ 1 & 0\end{pmatrix} = \begin{pmatrix}0 & x \\ 1 & 1\end{pmatrix} \begin{pmatrix} 0 & 0 \\ 0 & x \end{pmatrix} \begin{pmatrix}0 & x \\ 1 & 1\end{pmatrix}^{-1},
	\] and
	\[
	\begin{pmatrix} x & 0 \\ 1 & 1 \end{pmatrix} = \begin{pmatrix} 0 & x - 1 \\ 1 & 1 \end{pmatrix} \begin{pmatrix} 1 & 0 \\ 0 & x \end{pmatrix} \begin{pmatrix} 0 & x - 1 \\ 1 & 1 \end{pmatrix}^{-1},
	\] provided that \(x \notin \{0, 1\}\). Hence, taking Remark~\ref{remark di} into account, the second summand \(\begin{pmatrix} H_{n-1}(x) & 0 \\ u & 1 \end{pmatrix}\) is diagonalizable. 
	
	Returning to the first summand \(\begin{pmatrix} G_{n-1}(x) & v \\ 0 & -1 \end{pmatrix}\), we will use Lemma~\ref{cheo khoi} to perform the diagonalization. If \(x\) belongs to the center of \(D\), then the polynomial \(p(z) = z^2 + xz\) in the variable $z$ vanishes on \(\begin{pmatrix} 0 & 0 \\ 1 & -x \end{pmatrix}\). Consequently, \(p(z)\) vanishes on \(G_{n-1}(x)\). Moreover, if \(x \neq 1\), then \(p(-1)\) is invertible. Therefore, by using Lemma~\ref{cheo khoi}, the first summand \(\begin{pmatrix} G_{n-1}(x) & v \\ 0 & -1 \end{pmatrix}\) is similar to the block diagonal matrix \(G_{n-1}(x) \oplus (-1)\). Taking Remark~\ref{remark di} into account, it suffices to check whether the matrix \(G_{n-1}(x) \oplus (-1)\) is diagonalizable. It is straightforward to show that $$\begin{pmatrix} 0 & 0 \\ 1 & -x \end{pmatrix} = \begin{pmatrix} x & 0 \\ 1 & 1 \end{pmatrix} \begin{pmatrix} 0 & 0 \\ 0 & -x \end{pmatrix} \begin{pmatrix} x & 0 \\ 1 & 1 \end{pmatrix}^{-1}$$ is diagonalizable, and thus, by Remark~\ref{remark di}, the matrix \(G_{n-1}(x) \oplus (-1)\) is diagonalizable as well.
	
	To summarize, in this case, we must choose \(x \in D\) such that \(x\) belongs to the center of \(D\) and \(x \notin \{0, 1\}\). This is possible due to the assumption that the center of \(D\) contains at least three elements.
	
	\medskip
	
	\noindent\textbf{Case 2.} $n\geq3$ is odd and $a_{n-1}\neq0$. Following a similar argument as in Case 1, with a slight modification, we have the expression 
	\[
	A = \begin{pmatrix} G_{n-1}(x) & v \\ 0 & a_{n-1} \end{pmatrix} + \begin{pmatrix} H_{n-1}(x) & 0 \\ u & 0 \end{pmatrix}.
	\]
	Note that the second summand, \(\begin{pmatrix} H_{n-1}(x) & 0 \\ u & 0 \end{pmatrix}\), is precisely the block diagonal matrix
	\[
	(0) \oplus \left( \bigoplus_{\frac{n-1}{2}} \begin{pmatrix} x & 0 \\ 1 & 0 \end{pmatrix} \right).
	\]
	It follows from Case 1 that if \(x \neq 0\), the matrix \(\begin{pmatrix} x & 0 \\ 1 & 0 \end{pmatrix}\) is diagonalizable; hence, by Remark~\ref{remark di}, the second summand \(\begin{pmatrix} H_{n-1}(x) & 0 \\ u & 0 \end{pmatrix}\) is also diagonalizable.
	
	As discussed in Case 1, to handle the first summand, we must assume that \(x \neq -a_{n-1}\) and that \(x\) belongs to the center of \(D\). This assumption is valid due to the fact that the center of \(D\) contains at least three elements. With these conditions in place, we can conclude that the first summand, \(\begin{pmatrix} G_{n-1}(x) & v \\ 0 & a_{n-1} \end{pmatrix}\), is diagonalizable, as desired.
	
	For clarity, we emphasize that in this case, we choose \(x \in D\) such that \(x\) belongs to the center of \(D\) and \(x \notin \{0, -a_{n-1}\}\), which is possible by the assumption that the center of \(D\) contains at least three elements.
	
	\medskip
	
	\noindent\textbf{Case 3.} $n\geq4$ is even and $a_{n-1}=0$. In this case, the expression for \(A\) is only slightly different from those in Case 1 and Case 2, though the underlying arguments remain largely the same, with only a minor modification.  We now present the expression for \(A\), which is given by: 
	$$A = \begin{pmatrix} 
		G_{n-1}(x) & 0 \\
		u & -1
	\end{pmatrix} + \begin{pmatrix}
		H_{n-1}(x) & v \\
		0 & 1
	\end{pmatrix}.$$
	
	Note that the first summand, \(\begin{pmatrix} G_{n-1}(x) & 0 \\ u & -1 \end{pmatrix}\), is precisely the block diagonal matrix 
	$$\left( \bigoplus_{\frac{n-2}{2}} \begin{pmatrix} 0 & 0 \\ 1 & -x \end{pmatrix} \right) \oplus \begin{pmatrix} 0 & 0 \\ 1 & -1 \end{pmatrix}.$$
	As discussed in Case 1, both \(\begin{pmatrix} 0 & 0 \\ 1 & -x \end{pmatrix}\) and \(\begin{pmatrix} 0 & 0 \\ 1 & -1 \end{pmatrix}\) are diagonalizable if \(x \neq 0\). Hence, by Remark~\ref{remark di}, the first summand \(\begin{pmatrix} G_{n-1}(x) & 0 \\ u & -1 \end{pmatrix}\) is diagonalizable, as expected. 
	
	For the second summand, we will proceed as follows. Since the polynomial \(p(z) = z^2 - xz\) in the variable $z$ vanishes on \(H_{n-1}(x)\), and \(p(1)\) is invertible under the assumption that \(x \neq 1\) and \(x\) belongs to the center of \(D\), it follows from Lemma~\ref{cheo khoi} that the second summand \(\begin{pmatrix} H_{n-1}(x) & v \\ 0 & 1 \end{pmatrix}\) is similar to the block diagonal matrix 
	$$ (0) \oplus \left( \bigoplus_{\frac{n-2}{2}} \begin{pmatrix} x & 0 \\ 1 & 0 \end{pmatrix} \right) \oplus (1).$$
	As noted in Case 1, the matrix \(\begin{pmatrix} x & 0 \\ 1 & 0 \end{pmatrix}\) is diagonalizable. Therefore, by Remark~\ref{remark di}, the matrix 
	$$ (0) \oplus \left( \bigoplus_{\frac{n-2}{2}} \begin{pmatrix} x & 0 \\ 1 & 0 \end{pmatrix} \right) \oplus (1) $$ 
	is diagonalizable. Using Remark~\ref{remark di} once again, we conclude that the second summand \(\begin{pmatrix} H_{n-1}(x) & v \\ 0 & 1 \end{pmatrix}\) is also diagonalizable.
	
	Finally, we emphasize that in this case, we require \(x \in D\) such that \(x\) belongs to the center of \(D\) and \(x \notin \{0, 1\}\). This condition is possible due to the standing assumption that the center of \(D\) contains at least three elements.
	
	\medskip

	\noindent\textbf{Case 4.} $n\geq4$ is even and $a_{n-1}\neq0$. We begin by expressing \( A \) as follows:
	\[
	A = \begin{pmatrix} 
		G_{n-1}(x) & 0 \\
		u & -x 
	\end{pmatrix} + \begin{pmatrix}
		H_{n-1}(x) & v \\
		0 & a_{n-1} + x
	\end{pmatrix}.
	\]
	Note that the first summand, \( \begin{pmatrix} G_{n-1}(x) & 0 \\ u & -x \end{pmatrix} \), is explicitly the block diagonal matrix:
	\[
	\bigoplus_{\frac{n}{2}} \begin{pmatrix} 0 & 0 \\ 1 & -x \end{pmatrix}.
	\]
	As discussed in Case 1, the matrix \( \begin{pmatrix} 0 & 0 \\ 1 & -x \end{pmatrix} \) is diagonalizable when \( x \neq 0 \). Hence, by Remark~\ref{remark di}, the first summand \( \begin{pmatrix} G_{n-1}(x) & 0 \\ u & -x \end{pmatrix} \) is also diagonalizable.
	
	For the second summand, we require that \( x \neq -a_{n-1} \) and that \( x \) belongs to the center of \( D \). This assumption holds, as the center of \( D \) is assumed to contain at least three elements. As in Case 3, it is known that the polynomial \( p(z) = z^2 - xz \) in the variable $z$ vanishes on \( H_{n-1}(x) \), and that \( p(a_{n-1} + x) \) is invertible. Therefore, by Lemma~\ref{cheo khoi}, the second summand \( \begin{pmatrix} H_{n-1}(x) & v \\ 0 & a_{n-1}+x \end{pmatrix} \) is similar to the block diagonal matrix
	\[
	(0) \oplus \left( \bigoplus_{\frac{n-2}{2}} \begin{pmatrix} x & 0 \\ 1 & 0 \end{pmatrix} \right) \oplus (a_{n-1} + x).
	\]
	At this point, it is clear that the matrix \( \begin{pmatrix} x & 0 \\ 1 & 0 \end{pmatrix} \) is diagonalizable if $x\neq0$. Using Remark~\ref{lem Bo} twice consecutively, we conclude that the second summand $$ \begin{pmatrix} H_{n-1}(x) & v \\ 0 & 1 \end{pmatrix}$$ is diagonalizable, as anticipated.
	
	Finally, to reiterate, in this case, we must choose \( x \in D \) such that \( x \) belongs to the center of \( D \) and \( x \notin \{0, -a_{n-1}\} \), which is consistent with the assumption that the center of \( D \) contains at least three elements. Moreover, it is obvious that the number $2$ is sharp.

	Following the cases we have discussed and proven earlier, along with the result of Botha in \cite[Theorem 2.6]{Pa_Bo_20}, we now state the following result, which serves as the first main result of this paper.
	
	\begin{theorem}\label{sum1}
		Let $D$ be a division ring and let $n$ be a positive integer. If the center of $D$ contains at least three elements, then every matrix in $\mathrm{M}_n(D)$ can be expressed as a sum of two diagonalizable matrices in $\mathrm{M}_n(D)$. Moreover, the number $2$ is sharp.
	\end{theorem}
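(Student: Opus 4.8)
The plan is to assemble the theorem from the case analysis already carried out above for companion matrices, together with the reduction to rational canonical form. First I would separate off the commutative case: if $D$ is a field with at least three elements, the assertion is precisely \cite[Theorem~2.6]{Pa_Bo_20}, so from here on I may assume $D$ is noncommutative, hence infinite by Theorem~\ref{Wedderburn}. Next comes the reduction. Given $A\in\mathrm{M}_n(D)$, Lemma~\ref{companion} produces an invertible $P$ with $P^{-1}AP=A_1\oplus\cdots\oplus A_t$, each $A_i$ a companion matrix. If each $A_i=B_i+C_i$ with $B_i,C_i$ diagonalizable, then $A=P\bigl(\bigoplus_i B_i\bigr)P^{-1}+P\bigl(\bigoplus_i C_i\bigr)P^{-1}$, and both summands are diagonalizable by Remark~\ref{remark di}(ii) (to handle the direct sums) and Remark~\ref{remark di}(i) (to transfer diagonalizability along conjugation by $P$). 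So it suffices to decompose a single companion matrix.

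For the companion case, $n=1$ is trivial and $n=2$ is settled by the explicit two-term decomposition displayed above, which is legitimate because $D$ is infinite, so $d\in D\setminus\{0,a_1\}$ exists. For $n\ge 3$ I would invoke the four cases treated above --- $n$ odd or even, $a_{n-1}$ zero or nonzero --- which are exhaustive. In each, $A$ is written as a sum of two block-triangular matrices built from $G_{n-1}(x)$ and $H_{n-1}(x)$; Lemma~\ref{cheo khoi}, applied with one of the central polynomials $z^2+xz$ or $z^2-xz$ (which annihilate the relevant $G$- or $H$-part and are invertible at the scalar corner once $x$ avoids an at-most-two-element set), turns each summand into a genuine block-diagonal matrix whose $2\times2$ constituents $\begin{pmatrix}0&0\\1&-x\end{pmatrix}$, $\begin{pmatrix}x&0\\1&0\end{pmatrix}$, $\begin{pmatrix}x&0\\1&1\end{pmatrix}$ are diagonalized by the conjugations exhibited above, provided $x\notin\{0,1\}$. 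The only requirements on $x$ are that it lie in the center of $D$ (so that the polynomials above have central coefficients, as Lemma~\ref{cheo khoi} demands, and polynomial evaluation at the $G$- and $H$-blocks is unambiguous) and that it avoid $\{0,1\}$ or $\{0,-a_{n-1}\}$; since the center of $D$ has at least three elements, such an $x$ always exists. Collecting the four cases yields the positive part of the theorem.

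For sharpness (with $n\ge 2$): any nonzero nilpotent matrix --- say the one with a single $1$ in position $(1,2)$ --- fails to be diagonalizable, since if it were conjugate to $\mathrm{diag}(d_1,\dots,d_n)$, then $\mathrm{diag}(d_1^n,\dots,d_n^n)$ would be conjugate to its $n$-th power, which is $0$, forcing every $d_i=0$ (as $D$ has no zero divisors) and hence the matrix itself to be $0$, a contradiction. Thus it cannot be written as a sum of a single diagonalizable matrix, so no uniform bound smaller than $2$ is possible.

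The step I expect to be the real obstacle --- indeed already the bulk of the work above --- is the four-case bookkeeping for a companion matrix: choosing the two summands so that the off-diagonal corner block is exactly $v$, $u$, or $0$; that one of $z^2\pm xz$ simultaneously annihilates the $G$- or $H$-part and remains invertible at the corner; and that the residual $2\times2$ blocks all lie on the short list that can be diagonalized over a noncommutative division ring. Everything else is routine once these decompositions, and the availability of a suitable central element $x$ under the hypothesis $|Z(D)|\ge 3$, are in hand.
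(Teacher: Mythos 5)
Your proposal follows essentially the same route as the paper: reduce to companion matrices via Lemma~\ref{companion} and Remark~\ref{remark di}, handle $n=2$ by the explicit decomposition, and for $n\ge 3$ run the four-case analysis with $G_{n-1}(x)$, $H_{n-1}(x)$ and Lemma~\ref{cheo khoi}, choosing a central $x$ outside a two-element set, with Botha's theorem covering the commutative case. The argument is correct, and your explicit sharpness verification via a nonzero nilpotent matrix only makes precise what the paper asserts without proof.
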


	Let \( K \) be a field. It is shown in either \cite[\S4, Example 3]{Pa_ChuLee_79} or \cite[Proposition~2.3.5]{Bo_Co_95} that noncommutative division rings with center \( K \) exist. In particular, if \( K \) is the field of two elements, then there always exist noncommutative division rings with center \( K \). Furthermore, such division rings must be infinite-dimensional vector spaces over their centers, and each element commutes with at least one element that is not algebraic over the center. This follows directly from \cite[(13.11) Theorem (Jacobson)]{Bo_La_91}, which states that any division ring that is algebraic over its finite center must be commutative. Recall a division ring $D$ is said to be \textit{algebraic} if every element in $D$ is a root of a polynomial in one variable with coefficients from the center of $D$. Moreover, it is not difficult to verify that any division ring of finite-dimensional over its center must be an algebraic division ring.

	\subsection{Division rings with center containing only two elements}
	
The case where \( D \) is the field of two elements has been thoroughly addressed. In particular, Botha established in \cite[Theorem 2.6]{Pa_Bo_20} that every matrix over \( D \) can be expressed as a sum of three diagonalizable matrices. Moreover, a matrix over \( D \) can be expressed as a sum of two diagonalizable matrices if and only if it is similar to a matrix of the form  
	\[ 
	N \oplus \begin{pmatrix} 
		0 & X \\ 
		\mathrm{I} & 0 
	\end{pmatrix} \oplus (\mathrm{I} + M), 
	\]  
	where \( N \) and \( M \) are nilpotent, \( X \) and \( X + \mathrm{I} \) are nonsingular, and \( \mathrm{I} \) represents the identity matrix of suitable size. Additionally, the number $3$ is sharp.

	In this subsection, we show that if the center of \( D \) is the field of two elements, every matrix over \( D \) can be written as the sum of three diagonalizable matrices over $D$. Directly benefiting from Botha's result \cite[Theorem 2.6]{Pa_Bo_20}, we see that  the number $3$ in this case is sharp. To establish this, we revisit the proof of Theorem~{\rm \ref{sum1}}, employing the matrices \( G_n(x) \) and \( H_n(x) \), as well as the row \( u \) and column \( v \).
	The tools are now in place to validate the following theorem:
	
	\begin{theorem}\label{sum2}
		Let $D$ be a division ring and let $n$ be a positive integer. If the center of $D$ is the field of two elements, then every matrix over \( D \) can be written as a sum of three diagonalizable matrices over $D$. Moreover, the number $3$ is sharp.
	\end{theorem}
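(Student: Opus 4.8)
The plan is to follow the blueprint of the proof of Theorem~\ref{sum1}, but to replace its single appeal to Lemma~\ref{cheo khoi} — which produced one diagonalizable summand and required a \emph{central} element $x \notin \{0,1\}$, unavailable when the center is $\mathbb{F}_2$ — by a direct splitting into two diagonalizable matrices. Two facts are used throughout: since the center $\mathbb{F}_2$ has characteristic $2$ we have $-1 = 1$ in $D$, and since $D$ is noncommutative it is infinite by Theorem~\ref{Wedderburn}, so any finite set of elements of $D$ can be avoided when choosing an auxiliary $x$.

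As in Theorem~\ref{sum1}, by Lemma~\ref{companion} and Remark~\ref{remark di} it suffices to write a single companion matrix $A \in \mathrm{M}_n(D)$ as a sum of three diagonalizable matrices: if each companion block $C_i$ in the rational canonical form of a given matrix is a sum $D_{i1}+D_{i2}+D_{i3}$ of diagonalizable matrices, then grouping the $j$-th summands and using Remark~\ref{remark di}(ii) yields the conclusion for the original matrix. For $n=1$ the companion matrix is already diagonal, and for $n=2$ the proof of Theorem~\ref{sum1} writes $A$ as a sum of \emph{two} diagonalizable matrices (using any $d \in D \setminus \{0, a_1\}$), so padding with the zero matrix gives three. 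Hence we may assume $n \geq 3$.

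For $n \geq 3$ I would re-use, verbatim, the four decompositions $A = S_1 + S_2$ of Cases 1--4 in the proof of Theorem~\ref{sum1}, built from $G_{n-1}(x)$, $H_{n-1}(x)$, $u$, and $v$. In each case one of $S_1,S_2$ is the ``lower'' block, which the computations of those cases already exhibit as diagonalizable for any $x \neq 0$ (and $x \neq 1$ in Case~1), with no condition on the center; the other has the block upper-triangular form $T = \begin{pmatrix} B & v \\ 0 & b\end{pmatrix}$ with $B \in \{G_{n-1}(x), H_{n-1}(x)\}$ and $b \in \{-1,\, a_{n-1},\, a_{n-1}+x\}$. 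Instead of diagonalizing $T$ through Lemma~\ref{cheo khoi}, I split it as
\[
T = \bigl( B \oplus (0) \bigr) + \begin{pmatrix} 0 & v \\ 0 & b\end{pmatrix}.
\]
The matrix $B \oplus (0)$ is diagonalizable by Remark~\ref{remark di}(ii), since for $x \neq 0$ each $2 \times 2$ block of $G_{n-1}(x)$ and $H_{n-1}(x)$ is diagonalizable by the explicit conjugations recorded in the excerpt. The second summand is a rank-one matrix $wz$ with $w = \begin{pmatrix} v \\ b\end{pmatrix}$ and $z = (0\ \cdots\ 0\ 1)$, and over a division ring such a matrix is diagonalizable exactly when $zw \neq 0$: if $zw = 0$ then $(wz)^2 = w(zw)z = 0$ and a nonzero nilpotent matrix is never diagonalizable, whereas if $zw = \lambda \neq 0$ then $w$ spans a complement of the right hyperplane $\{u : zu = 0\} = \ker(wz)$ on which $wz$ acts as $\lambda$, so $wz$ is similar to $\mathrm{diag}(0,\dots,0,\lambda)$. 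Here $zw = b$, which is nonzero in every case: $b = -1 = 1$ in Cases~1 and~3, $b = a_{n-1} \neq 0$ in Case~2, and $b = a_{n-1}+x \neq 0$ in Case~4 once we take $x \neq a_{n-1}$. Choosing $x \in D \setminus \{0,1,a_{n-1}\}$ therefore makes all three summands diagonalizable, completing the existence part.

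Sharpness should be inherited from Botha's theorem \cite[Theorem 2.6]{Pa_Bo_20} by regarding $\mathrm{M}_n(\mathbb{F}_2) \subseteq \mathrm{M}_n(D)$: if $A_0$ is a matrix over the center that is not a sum of two diagonalizable matrices over $\mathbb{F}_2$, one wants the same conclusion over $D$, and I expect this to be the main obstacle of the whole proof, because the summands in a putative decomposition $A_0 = B + C$ over $D$ need not have central entries. The natural route is to observe that a matrix with central entries that is conjugate over $D$ to a diagonal matrix has squarefree minimal polynomial over $\mathbb{F}_2$ (conjugating back shows each diagonal entry is a root of the central characteristic polynomial, and squarefreeness follows since $D$ has no zero divisors), and then to take $D$ to have no elements algebraic over $\mathbb{F}_2$ beyond $\mathbb{F}_2$ itself, for which a matrix over $\mathbb{F}_2$ is diagonalizable over $D$ if and only if it is idempotent — returning the count over $D$ to Botha's count over $\mathbb{F}_2$. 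Making this reduction airtight for summands with arbitrary entries is the delicate point; the existence half above is otherwise entirely mechanical.
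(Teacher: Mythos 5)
Your existence argument for noncommutative $D$ is correct, and it takes a genuinely different route from the paper. For $n\geq 3$ the paper does not recycle the two-term decompositions of Theorem~\ref{sum1}: it writes the companion matrix directly as a sum of three terms built from $G_{n-1}(x),H_{n-1}(x),u,v$ with the central choice $x=-x=1$, splitting only into the cases $n$ odd and $n$ even, and checks each term via Lemma~\ref{cheo khoi}, Remark~\ref{remark di} and explicit similarities. You instead keep the two-term decompositions of Cases 1--4 of Theorem~\ref{sum1} and split the problematic triangular summand as $\bigl(B\oplus(0)\bigr)+\begin{pmatrix}0&v\\0&b\end{pmatrix}$, observing that the second piece is rank one with $b\neq 0$ in its $(n,n)$ corner and hence similar to $\mathrm{diag}(0,\dots,0,b)$ over $D$; this is a correct and clean observation (the kernel is the hyperplane $\{u: e_n^Tu=0\}$ and the column $(v;b)$ spans a complement on which the matrix acts by right multiplication by $b$). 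What your route buys is that no centrality of $x$ is needed at all — only $x\in D\setminus\{0,1,a_{n-1}\}$, available since a noncommutative $D$ is infinite by Theorem~\ref{Wedderburn} — and Lemma~\ref{cheo khoi} is bypassed; what the paper's route buys is that the choice $x=1$ keeps everything uniform and ties the argument to Botha's original scheme.

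There are, however, two gaps. First, the statement covers $D$ equal to the field of two elements itself (its center is itself), and your argument uses the infinitude of $D$ both for $n=2$ (choice of $d$) and for $n\geq 3$ (choice of $x$); this commutative case must be, and in the paper is, settled by citing Botha \cite[Theorem 2.6]{Pa_Bo_20}. Second, you leave sharpness unfinished, but the difficulty you anticipate is self-inflicted: sharpness of the bound $3$ in the theorem only requires exhibiting one $D$ with center the field of two elements for which $2$ fails, and $D$ the field of two elements together with Botha's counterexample $\begin{pmatrix}1&1\\1&0\end{pmatrix}\oplus \mathrm{I}_{n-2}$ (via \cite[Lemma 2.2]{Pa_Bo_98}) does this at once — exactly the paper's argument. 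Your proposed transfer of the counterexample to a noncommutative $D$ with center of two elements is not only unnecessary but cannot succeed in general: the paper proves (the theorem following Theorem~\ref{sum3}) that over any noncommutative division ring every $2\times 2$ matrix is a sum of two diagonalizable matrices, so $\begin{pmatrix}1&1\\1&0\end{pmatrix}$ ceases to be a counterexample there. With the commutative case delegated to Botha and sharpness read as above, your proof is complete.
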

	
	\begin{proof}
		We know from the result of Botha \cite[Theorem 2.6]{Pa_Bo_20} that the proof is completed in the case where $D$ is commutative. Hence, we focus on the case where $D$ is noncommutative. By Theorem~\ref{Wedderburn}, every finite division ring is a field, implying that \( D \) must be infinite. 
		
		By Lemma~\ref{companion} and Remark~\ref{remark di}, it suffices to establish the result for companion matrices. Let \( A \in \mathrm{M}_n(D) \) be a companion matrix, and consider the matrices \( G_n(x) \), \( H_n(x) \), as well as the row \( u \) and column \( v \), as  in the proof of Theorem~\ref{sum1}. It is clear that the case \( n=1 \) is trivial, while the case \( n=2 \), already addressed in the proof of Theorem~\ref{sum1}, remains valid. For the sake of clarity, we provide a brief illustration of the case \( n=2 \) below.
		
		If \( n=2 \), we have 
		\[
		A = 
		\begin{pmatrix}
			0 & a_0 \\
			1 & a_1
		\end{pmatrix} 
		= 
		\begin{pmatrix}
			0 & 0 \\
			1 & a_1 - d
		\end{pmatrix} 
		+ 
		\begin{pmatrix}
			0 & a_0 \\
			0 & d
		\end{pmatrix},
		\]
		which can be rewritten as 
		\begin{eqnarray*}
			A &=& 
			\begin{pmatrix}
				1 & 0 \\
				(d - a_1)^{-1} & 1
			\end{pmatrix} 
			\begin{pmatrix}
				0 & 0 \\
				0 & a_1 - d
			\end{pmatrix} 
			\begin{pmatrix}
				1 & 0 \\
				(d - a_1)^{-1} & 1
			\end{pmatrix}^{-1} \\
			&&+ 
			\begin{pmatrix}
				1 & a_0 d^{-1} \\
				0 & 1
			\end{pmatrix} 
			\begin{pmatrix}
				0 & 0 \\
				0 & d
			\end{pmatrix} 
			\begin{pmatrix}
				1 & a_0 d^{-1} \\
				0 & 1
			\end{pmatrix}^{-1},
		\end{eqnarray*}
		where \( d \in D \setminus \{a_1, 0\} \). This decomposition expresses \( A \) as a sum of two diagonalizable matrices over \( D \). Notably, the choice of \( d \) is feasible even when the center of \( D \) does not contain at least three elements, but $D$ must be infinite.
		
		We now proceed to the case \( n \geq 3 \). The proof is divided into two cases and follows a similar strategy to the proof by Botha in \cite[Lemma 2.3]{Pa_Bo_20}. However, we present it here for completeness.
		
		\medskip
		
		\noindent\textbf{Case 1.} \( n \geq 3 \) is odd.  
		In this case, we use the decomposition:
		\[
		A = 
		\begin{pmatrix}
			G_{n-1}(x) & 0 \\
			0 & 1 + a_{n-1}
		\end{pmatrix} 
		+ 
		\begin{pmatrix}
			H_{n-1}(x) & 0 \\
			u & 0
		\end{pmatrix} 
		+ 
		\begin{pmatrix}
			0 & v \\
			0 & 1
		\end{pmatrix},
		\]
		where \( x = -x = 1 \). Since \( x = 1 \neq 0 \), the first term 
		\[
		\begin{pmatrix}
			G_{n-1}(x) & 0 \\
			0 & 1 + a_{n-1}
		\end{pmatrix}
		\] 
		is diagonalizable by Lemma~\ref{cheo khoi}. Similarly, the second term 
		\[
		\begin{pmatrix}
			H_{n-1}(x) & 0 \\
			u & 0
		\end{pmatrix}
		\] 
		and the third term 
		\[
		\begin{pmatrix}
			0 & v \\
			0 & 1
		\end{pmatrix}
		\] 
		are, respectively, similar to \( H_{n-1}(x) \oplus (0) \) and \( \left( \bigoplus_{n-1}(0) \right) \oplus (1) \), both of which are diagonalizable. By Remark~\ref{remark di}, the proof for this case is complete.
		
		\medskip
		
		\noindent\textbf{Case 2.}  \( n \) is even.  
		For this case, consider the decomposition:
		\[
		A = 
		\begin{pmatrix}
			G_{n-1}(x) & 0 \\
			u & 1
		\end{pmatrix} 
		+ 
		\begin{pmatrix}
			H_{n-1}(x) & 0 \\
			0 & a_{n-1}
		\end{pmatrix} 
		+ 
		\begin{pmatrix}
			0 & v \\
			0 & 1
		\end{pmatrix},
		\]
		where \( x = -x = 1 \). By following the same reasoning as in the previous case, we conclude that all three summands are diagonalizable, completing the proof.

		The claim that the number \( 3 \) is sharp is confirmed by \cite[Lemma 2.2]{Pa_Bo_98}. In particular, the matrix 
		\[
		\begin{pmatrix}
			1 & 1 \\
			1 & 0
		\end{pmatrix} 
		\oplus \mathrm{I}_{n-2}
		\] 
		serves as a counterexample to any decomposition into fewer than three diagonalizable matrices.
	\end{proof}
	
	In \cite[Lemma 2.5]{Pa_Bo_20}, Botha established the necessary and sufficient conditions for a matrix over the field of two elements to be expressed as the sum of two diagonalizable matrices. While this result is quite intriguing, we are currently unaware of a method to approach the problem in the more general context of noncommutative division rings. Consequently, we leave this as an open problem.
	
	\begin{open}\label{open1}
		Let \( D \) be a division ring such that the center of \( D \) is the field of two elements. Find a necessary and sufficient condition for a matrix over \( D \) to be the sum of two diagonalizable matrices.
	\end{open}

	Even without the condition that the center of \( D \) contains at least three elements, some matrices, such as nilpotent matrices, can still be expressed as the sum of two diagonalizable matrices. To illustrate this, let \( N \in \mathrm{M}_n(D) \) be a nilpotent matrix. As shown in \cite[Theorem~5]{Pa_Mo_12}, every nilpotent matrix over a division ring is similar to a block diagonal matrix consisting of Jordan blocks, where all entries on the main diagonal are \( 0 \). In particular, there exists \( P \in \mathrm{GL}_n(D) \) such that  
	\[
	P^{-1} N P = \bigoplus_{i=1}^s J_{m_i}(0),  
	\]  
	for some positive integers \( m_1, m_2, \ldots, m_s \), \( s \), where \( m_1 + m_2 + \cdots + m_s = n \). Here,  
	\[
	J_{m_i}(0) = \begin{pmatrix} 
		0 & 1 & 0 & \cdots & 0 & 0 & 0 \\ 
		0 & 0 & 1 & \cdots & 0 & 0 & 0 \\ 
		0 & 0 & 0 & \cdots & 0 & 0 & 0 \\ 
		\vdots & \vdots & \vdots & \ddots & \vdots & \vdots & \vdots \\ 
		0 & 0 & 0 & \cdots & 0 & 1 & 0 \\ 
		0 & 0 & 0 & \cdots & 0 & 0 & 1 \\ 
		0 & 0 & 0 & \cdots & 0 & 0 & 0 
	\end{pmatrix} \in \mathrm{M}_{m_i}(D).
	\]  
	By Remark~\ref{remark di}, it suffices to show that each \( J_{m_i}(0) \) can be expressed as the sum of two diagonalizable matrices over \( D \). Even when the center of \( D \) is the field of two elements, we can always select \( x \in D \) such that both \( G_{m_i}(x) \) and \( H_{m_i}(x) \) are diagonalizable. Furthermore, it is straightforward to verify that $$ J_{m_i}(0) = G_{m_i}(x) + H_{m_i}(x).$$ Thus, \( J_{m_i}(0) \) can indeed be expressed as the sum of two diagonalizable matrices, as claimed.  
	
	For convenience, we summarize this result as follows, which  offers insight into Problem~\ref{open1}.

	\begin{theorem}\label{sum3}
		Let $D$ be a division ring and let $n$ be a positive integer. If a matrix $N\in\mathrm{M}_n(D)$ is nilpotent, then the matrix $N$ can be expressed as a sum of two diagonalizable matrices over $D$. Moreover, the number $2$ is sharp.
	\end{theorem}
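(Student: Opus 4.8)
The plan is to reduce the problem to a single Jordan block and then exhibit an explicit two-term decomposition. By \cite[Theorem~5]{Pa_Mo_12}, the nilpotent matrix $N$ is similar to a block diagonal matrix $\bigoplus_{i=1}^{s} J_{m_i}(0)$ whose blocks are nilpotent Jordan blocks, so Remark~\ref{remark di} reduces matters to showing that each $J_{m_i}(0)$ is a sum of two diagonalizable matrices over $D$. The block of size $1$ is the zero matrix and is trivially handled, so assume $m_i \geq 2$. If $D$ is commutative the statement is already contained in \cite[Theorem~2.6]{Pa_Bo_20}, so we may as well take $D$ noncommutative, whence $D$ is infinite by Theorem~\ref{Wedderburn}.

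For a single block I would reuse the auxiliary matrices $G_m(x)$ and $H_m(x)$ from the proof of Theorem~\ref{sum1}. A direct matrix computation — carried out separately for $m$ even and $m$ odd — verifies the identity $J_m(0) = G_m(x) + H_m(x)$ for every $x \in D \setminus \{0\}$ (with the usual harmless transpose between the two Jordan conventions). It then remains to pick $x$ so that both summands are diagonalizable. This was effectively checked inside Case~1 of the proof of Theorem~\ref{sum1}: the $2 \times 2$ blocks $\begin{pmatrix} 0 & 0 \\ 1 & -x \end{pmatrix}$ and $\begin{pmatrix} x & 0 \\ 1 & 0 \end{pmatrix}$ are diagonalizable whenever $x \notin \{0,1\}$, while the scalar blocks $(0)$ and $(x)$ are trivially diagonalizable, so by Remark~\ref{remark di} both $G_m(x)$ and $H_m(x)$ are diagonalizable for any choice of $x \notin \{0,1\}$. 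Crucially, such an $x$ exists no matter how small the center of $D$ is — even when it is the field of two elements — precisely because Theorem~\ref{Wedderburn} forces the noncommutative $D$ to be infinite; this is the one place where the otherwise routine argument genuinely uses a structural fact, and it is the point I would flag as the main subtlety (alongside the bookkeeping in verifying $J_m(0) = G_m(x) + H_m(x)$ in each parity).

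For the sharpness of the number $2$, I would record that a nilpotent diagonalizable matrix is necessarily zero: if $A$ is nilpotent and $A = P\,\mathrm{diag}(d_1,\dots,d_n)\,P^{-1}$, then $\mathrm{diag}(d_1^k,\dots,d_n^k) = 0$ for some $k$, and since a division ring has no zero divisors every $d_i = 0$, so $A = 0$. Hence any nonzero nilpotent matrix in $\mathrm{M}_n(D)$ with $n \geq 2$ — for instance $J_2(0) \oplus \bigoplus_{n-2}(0)$ — fails to be a sum of a single diagonalizable matrix, so two summands are indeed needed in general.
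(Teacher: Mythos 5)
Your proposal is correct and follows essentially the same route as the paper: reduce via \cite[Theorem~5]{Pa_Mo_12} and Remark~\ref{remark di} to a single nilpotent Jordan block, then write it as $G_m(x)+H_m(x)$ with $x$ chosen (possible since a noncommutative $D$ is infinite by Theorem~\ref{Wedderburn}) so that both summands are diagonalizable; note that only $x\neq 0$ is actually needed, so your condition $x\notin\{0,1\}$ is harmless overcaution. Your explicit sharpness argument (a diagonalizable nilpotent matrix over a division ring must be zero) is a welcome addition that the paper leaves implicit.
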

	
Following the proof of Theorem~\ref{sum2}, we obtain the following result, which contributes to addressing Problem~\ref{open1}.

\begin{theorem}
If $D$ is a noncommutative division ring, then every matrix in $\mathrm{M}_2(D)$ can be expressed as a sum of two diagonalizable matrices in $\mathrm{M}_2(D)$. Moreover, the number $2$ is sharp.
\end{theorem}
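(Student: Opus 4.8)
The plan is to pass to rational canonical form and then re-use the $n=2$ computation already carried out inside the proof of Theorem~\ref{sum1}. The first observation is that a noncommutative division ring is infinite by Theorem~\ref{Wedderburn}; this is the only property of $D$ the argument uses, so no hypothesis on the center enters. By Lemma~\ref{companion} together with Remark~\ref{remark di}, it suffices to decompose a block diagonal matrix of companion matrices of total size $2$, since conjugating a decomposition into a sum of diagonalizable matrices again yields such a decomposition. As the only block structures available on a $2\times 2$ matrix are a single block of size $2$ or two blocks of size $1$, every matrix in $\mathrm{M}_2(D)$ is similar either to a $2\times 2$ companion matrix $\begin{pmatrix}0&a_0\\1&a_1\end{pmatrix}$ with $a_0,a_1\in D$, or to a diagonal matrix $\mathrm{diag}(a,b)$ with $a,b\in D$.

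In the diagonal case there is nothing to prove, since $\mathrm{diag}(a,b)=\mathrm{diag}(a,0)+\mathrm{diag}(0,b)$ already displays it as a sum of two diagonal matrices. In the companion case I would choose $d\in D\setminus\{0,a_1\}$, which is possible because $D$ is infinite, and use the splitting
\[
\begin{pmatrix}0&a_0\\1&a_1\end{pmatrix}=\begin{pmatrix}0&0\\1&a_1-d\end{pmatrix}+\begin{pmatrix}0&a_0\\0&d\end{pmatrix},
\]
in which conjugation by $\begin{pmatrix}1&0\\(d-a_1)^{-1}&1\end{pmatrix}$ carries the first summand to $\mathrm{diag}(0,a_1-d)$ and conjugation by $\begin{pmatrix}1&a_0d^{-1}\\0&1\end{pmatrix}$ carries the second to $\mathrm{diag}(0,d)$; both summands are then diagonalizable by Remark~\ref{remark di}. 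This is precisely the decomposition used for $n=2$ in the proof of Theorem~\ref{sum1}, and it settles the existence part.

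For the sharpness claim, being expressible as a sum of a single diagonalizable matrix simply means being diagonalizable, and a diagonalizable nilpotent matrix over a division ring must be zero, since its diagonal form would have nilpotent, hence zero, diagonal entries. Thus the nonzero nilpotent matrix $\begin{pmatrix}0&1\\0&0\end{pmatrix}\in\mathrm{M}_2(D)$ witnesses that $2$ cannot be lowered to $1$, as was already noted in Theorem~\ref{sum3}. I do not expect a genuine obstacle here; the one point to keep in view is that the $n=2$ computation requires only $d\neq 0$ and $d\neq a_1$, with neither centrality of $d$ nor any lower bound on the number of elements of the center, which is exactly why the conclusion holds for every noncommutative $D$ and hence bears on Problem~\ref{open1}. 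The only computations are the two conjugations above, and they are routine once one is careful about the side on which scalars are multiplied over a noncommutative ring.
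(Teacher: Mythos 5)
Your proposal is correct and follows essentially the same route as the paper: reduce to companion matrices via the rational canonical form and Remark~\ref{remark di}, then reuse the explicit $n=2$ splitting $\begin{pmatrix}0&a_0\\1&a_1\end{pmatrix}=\begin{pmatrix}0&0\\1&a_1-d\end{pmatrix}+\begin{pmatrix}0&a_0\\0&d\end{pmatrix}$ with $d\in D\setminus\{0,a_1\}$, available because a noncommutative division ring is infinite by Wedderburn's theorem, exactly as in the proofs of Theorems~\ref{sum1} and~\ref{sum2}. Your sharpness witness (a nonzero nilpotent, hence nondiagonalizable, matrix) is a fine and explicit justification of the claim.
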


	\section{Products of diagonalizable matrices}\label{product}

	Between 1998 and 1999, Botha investigated in \cite{Pa_Bo_98,Pa_Bo_99} the decomposition of matrices over fields into products of diagonalizable matrices, arriving at a noteworthy result: any matrix over a field with at least four elements can be expressed as a product of two diagonalizable matrices. In this section, we continue this result by considering noncommutative division rings in place of fields. 
	
	Building on the previous section, we observe that, with the help of Lemma~\ref{companion} and Remark~\ref{remark di}, it is sufficient to focus on companion matrices. To proceed, we divide the discussion into three cases, which are presented in the following sections. Throughout this section, let $$A=\begin{pmatrix}
		0& & &a_0 \\
		1& & &a_1\\
		&\ddots& &\vdots\\
		0& &1&a_{n-1}
	\end{pmatrix}\in \mathrm{M}_n(D),$$ where $a_0,a_1,\ldots,a_{n-1}\in D$. The case where \( n = 1 \) is straightforward, so we focus on the case where \( n \geq 2 \).

	\subsection{Division rings of characteristic different from 2}
	
	Building upon Botha's approach in \cite[Theorem 2.1]{Pa_Bo_98}, we start with the following factorization of the matrix:  
	\[
	A = \begin{pmatrix}
		&&1\\
		&\iddots&\\
		1&&
	\end{pmatrix} 
	\begin{pmatrix}
		& &1&a_{n-1}\\
		&\iddots& &\vdots\\
		1& & &a_1\\
		0&\ldots &0 &a_0
	\end{pmatrix}.
	\]  
	It is straightforward to verify that  
	\[
	\begin{pmatrix}
		&&1\\
		&\iddots&\\
		1&&
	\end{pmatrix}^2 = \mathrm{I}_n,
	\]  
	indicating that this matrix is an involution in \(\mathrm{M}_n(D)\). As shown in \cite[Proposition~2.3]{Pa_BiDuHaSo_23}, every involution in \(\mathrm{M}_n(D)\) is diagonalizable if the characteristic of \(D\) is different from \(2\). Consequently, the matrix  
	\[
	\begin{pmatrix}
		&&1\\
		&\iddots&\\
		1&&
	\end{pmatrix}
	\]  
	is diagonalizable.  
	
	If \( a_0 \neq \pm1 \), we claim that the matrix  
	\[
	\begin{pmatrix}
		& &1&a_{n-1}\\
		&\iddots& &\vdots\\
		1& & &a_1\\
		0&\ldots&0 &a_0
	\end{pmatrix}
	\]  
	is similar to the block diagonal matrix  
	\[
	\begin{pmatrix}
		&&1\\
		&\iddots&\\
		1&&
	\end{pmatrix} \oplus (a_0).
	\]  
	Indeed, the polynomial \( p(z) = z^2 -1 \) in the variable $z$ vanishes on any involution in \(\mathrm{M}_n(D)\), and since \( a_0 \neq \pm1 \), it follows that \( p(a_0) \) is invertible. By Lemma~\ref{cheo khoi}, the matrix is diagonalizable. This confirms the result for the case \( a_0 \neq \pm1 \).  
	
	Next, consider the case \( a_0 = \pm1 \). For the case where $D$ is commutative, the result is already established by Botha in \cite[Theorem 2.1]{Pa_Bo_98}. We therefore examine the case where \( D \) is noncommutative. By Theorem~\ref{Wedderburn}, every finite division ring is a field, implying that \( D \) must be infinite. Thus, we can select \( a \in D \) such that \( a^2 \neq \pm a_0 \).  
	
	With this choice, consider the following factorization:  
	\[
	A = \begin{pmatrix}
		& & &a^2\\
		& &1& \\
		&\iddots& &\\
		1& & &
	\end{pmatrix} 
	\begin{pmatrix}
		& &1&a_{n-1}\\
		&\iddots&&\vdots\\
		1&&&a_1\\
		0&\ldots&0&a^{-2}a_0
	\end{pmatrix}.
	\]  
	Since \( a^2 \neq \pm a_0 \), it follows that \( a^{-2}a_0 \neq \pm1 \). By Lemma~\ref{cheo khoi}, the matrix  
	\[
	\begin{pmatrix}
		& &1&a_{n-1}\\
		&\iddots&&\vdots\\
		1&&&a_1\\
		0&\ldots&0&a^{-2}a_0
	\end{pmatrix}
	\]  
	is similar to the block diagonal matrix  
	\[
	\begin{pmatrix}
		&&1\\
		&\iddots&\\
		1&&
	\end{pmatrix} \oplus (a^{-2}a_0).
	\]  
	Since the matrix  
	\[
	\begin{pmatrix}
		&&1\\
		&\iddots&\\
		1&&
	\end{pmatrix}
	\]  
	is diagonalizable, it follows from Remark~\ref{remark di} that the matrix  
	\[
	\begin{pmatrix}
		& &1&a_{n-1}\\
		&\iddots&&\vdots\\
		1&&&a_1\\
		0&\ldots&0&a^{-2}a_0
	\end{pmatrix}
	\]  
	is also diagonalizable.  Finally, we address the diagonalization of the matrix  
	\[
	\begin{pmatrix}
		& & &a^2\\
		& &1& \\
		&\iddots& &\\
		1& & &
	\end{pmatrix}.
	\]  
	Note that the entries of this matrix do not necessarily belong to the center of \(D\) due to the presence of \(a^2\). However, we claim that the entries of this matrix belong to a field contained in \(D\). Let \(K\) be the subdivision ring of \(D\) generated by \(a^2\) over the center of \(D\). It is straightforward to verify that \(K\) is indeed a field containing the center of \(D\). Therefore, it follows that  
	\[
	\begin{pmatrix}
		& & &a^2\\
		& &1& \\
		&\iddots& &\\
		1& & &
	\end{pmatrix} \in \mathrm{M}_n(K).
	\]  
	Given the standing assumption that \(D\) has characteristic different from \(2\), it follows that \(K\) also has characteristic different from \(2\). Consequently, by \cite[Lemma 1.1(b)]{Pa_Bo_98}, we conclude that the matrix  
	\[
	\begin{pmatrix}
		& & &a^2\\
		& &1& \\
		&\iddots& &\\
		1& & &
	\end{pmatrix}
	\]  
	is diagonalizable.

	With these observations, we confirm the validity of the following result, which constitutes one of the main contributions of this paper.

	\begin{theorem}\label{cha khac 2}
		Let $D$ be a division ring of characteristic different from $2$ and let $n$ be a positive integer. If  $D$ is not the field of three elements, then every matrix in $\mathrm{M}_n(D)$ is expressible as a product of two diagonalizable matrices in $\mathrm{M}_n(D)$. Moreover, the number $2$ is sharp.
	\end{theorem}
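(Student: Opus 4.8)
The plan is to reduce, as in the previous sections, to the case of a single companion matrix $A \in \mathrm{M}_n(D)$ via Lemma~\ref{companion} and Remark~\ref{remark di}, and then to exploit the explicit factorization
\[
A = \begin{pmatrix} &&1\\ &\iddots&\\ 1&& \end{pmatrix}
\begin{pmatrix} & &1&a_{n-1}\\ &\iddots& &\vdots\\ 1& & &a_1\\ 0&\ldots &0 &a_0 \end{pmatrix}
\]
that already appears in the discussion preceding the theorem. The reversal matrix $J$ is an involution, hence diagonalizable when $\mathrm{char}\, D \neq 2$ by \cite[Proposition~2.3]{Pa_BiDuHaSo_23}. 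For the second factor, I would split into the case $a_0 \neq \pm 1$, where Lemma~\ref{cheo khoi} with $p(z) = z^2 - 1$ shows it is similar to $J \oplus (a_0)$ and hence diagonalizable, and the case $a_0 = \pm 1$, where one instead writes $A = \left(\begin{smallmatrix} & & &a^2\\ & &1& \\ &\iddots& &\\ 1& & & \end{smallmatrix}\right) \cdot \left(\begin{smallmatrix} & &1&a_{n-1}\\ &\iddots&&\vdots\\ 1&&&a_1\\ 0&\ldots&0&a^{-2}a_0 \end{smallmatrix}\right)$ for a suitable $a \in D$ with $a^2 \neq \pm a_0$ (possible since $D$ is infinite by Theorem~\ref{Wedderburn}); then $a^{-2}a_0 \neq \pm 1$, so the second factor is again handled by Lemma~\ref{cheo khoi}, while the first factor lies in $\mathrm{M}_n(K)$ for $K = Z(D)(a^2)$, a commutative field of characteristic $\neq 2$, so it is diagonalizable over $K$ (hence over $D$) by the commutative result \cite[Lemma~1.1(b)]{Pa_Bo_98}.

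The main remaining point is to verify sharpness: that $2$ cannot in general be improved to $1$, i.e.\ not every matrix in $\mathrm{M}_n(D)$ is itself diagonalizable. For $n \geq 2$ this is immediate — a nonzero nilpotent Jordan block is not diagonalizable — and for $n = 1$ the statement is about elements of $D$, where it is vacuous anyway (every $1\times 1$ matrix is diagonal), so ``sharp'' should be read as asserting the bound $2$ is attained for $n \geq 2$. I would note that for $n\geq 2$ the matrix $J_2(0) \oplus \mathrm{I}_{n-2}$ (a single $2\times 2$ nilpotent Jordan block padded with the identity) is not diagonalizable, so a single diagonalizable matrix does not suffice; this parallels the field case and needs only the elementary observation that a matrix similar to a diagonal matrix over $D$ has minimal polynomial with no repeated roots in an appropriate sense, which already rules out $J_2(0)$.

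The one genuinely delicate step is the justification that the ``flipped'' matrix $\left(\begin{smallmatrix} & &1&a_{n-1}\\ &\iddots& &\vdots\\ 1& & &a_1\\ 0&\ldots &0 &a_0 \end{smallmatrix}\right)$ really is similar to $J_{n-1} \oplus (a_0)$ via Lemma~\ref{cheo khoi}: one must recognize this matrix in the block form $\left(\begin{smallmatrix} B & \alpha \\ 0 & a \end{smallmatrix}\right)$ with $B = J_{n-1}$ the $(n-1)\times(n-1)$ reversal involution, $\alpha = (a_{n-1}, \ldots, a_1)^T$, and $a = a_0$, check $p(B) = B^2 - \mathrm{I}_{n-1} = 0$, and check $p(a_0) = a_0^2 - 1$ invertible precisely when $a_0 \neq \pm 1$ — here I would take care that $a_0^2 - 1 = (a_0-1)(a_0+1)$ is the right factorization even in the noncommutative ring, which is fine since $a_0, 1$ commute. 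The case analysis on $a_0$ is what makes the noncommutative argument differ from Botha's: replacing $J$ by a matrix whose single off-pattern entry is $a^2$ (rather than an arbitrary scalar) is exactly what keeps that conjugating matrix inside a commutative subfield so that the commutative lemma applies, and flagging why $a^2$ rather than $a$ is used (to ensure $Z(D)(a^2)$ is a field, cf.\ the discussion above) is the key subtlety to get right.
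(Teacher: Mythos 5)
Your argument is, step for step, the same as the paper's: the reduction to companion matrices, the factorization with the reversal involution $J$, the use of Lemma~\ref{cheo khoi} with $p(z)=z^2-1$ when $a_0\neq\pm1$, and the modified factorization with corner entry $a^2$ and last entry $a^{-2}a_0$, the first factor being handled inside the commutative subfield generated by $a^2$ over the center via \cite[Lemma~1.1(b)]{Pa_Bo_98}. The sharpness discussion you add is harmless and correct (the paper does not spell it out).

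There is, however, one genuine gap in the case $a_0=\pm1$: your justification of the choice of $a$ with $a^2\neq\pm a_0$ is ``possible since $D$ is infinite by Theorem~\ref{Wedderburn}'', which presupposes that $D$ is noncommutative. The theorem as stated covers fields as well --- that is exactly why the field of three elements is excluded --- and for a finite field of characteristic different from $2$ the required $a$ need not exist: in $\mathbb{F}_5$ with $a_0=1$, every nonzero square equals $\pm1$, so no admissible $a$ can be chosen and your modified factorization cannot even be written down. The paper avoids this by first disposing of commutative $D$ via Botha's result \cite[Theorem 2.1]{Pa_Bo_98} and only then invoking Wedderburn's theorem to get infiniteness of a noncommutative $D$. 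Your proof needs that same reduction (or some separate treatment of fields); for infinite fields your construction happens to survive, but the cited justification is inapplicable, and for small finite fields the step fails outright.

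A secondary remark: your explanation of why $a^2$ rather than $a$ is placed in the corner (``to ensure $Z(D)(a^2)$ is a field'') is not quite the point, since $Z(D)(a)$ is equally a field; the relevant feature is that the corner entry is a square in $D$, which is what makes the first factor diagonalizable (its square is the diagonal matrix $\mathrm{diag}(a^2,1,\dots,1,a^2)$, and the corresponding polynomial has the roots $\pm a$, $\pm1$ available). This does not affect the logic of your steps, but the stated reason should be corrected.
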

	
	In the remaining case where $D$ is the field of three elements, it is shown in \cite[Theorem 2.4]{Pa_Bo_98} that every matrix in $\mathrm{M}_n(D)$ is expressible as a product of three diagonalizable matrices in $\mathrm{M}_n(D)$. Moreover, the number $3$ is sharp.

	\subsection{Division rings of characteristic 2}
	
	Botha showed in \cite[Theorem 2.2]{Pa_Bo_99} that if $D$ is a field of characteristic $2$, containing at least four elements, then every matrix over $D$ can be written as a product of two diagonalizable matrices over $D$. To the best of our knowledge, there is currently no known technique to address the case where $D$ is noncommutative and the characteristic of \( D \) is $2$. While Botha addressed  the case in \cite{Pa_Bo_99} where \( D \) is a field of characteristic $2$, it is important to note that results for division rings often differ significantly from those for fields. In many instances, satisfactory analogues are lacking, and the existing results remain incomplete. 
	
	Building on the previous subsection, we know that any involution in $\mathrm{M}_n(D)$ is diagonalizable when $D$ has a characteristic different from 2. However, as established in \cite[Proposition 2.3]{Pa_BiDuHaSo_23}, if $D$ has characteristic 2, then every involution in $\mathrm{M}_n(D)$ is similar to a matrix over the center of $D$. Consequently, following the proof of Theorem~\ref{cha khac 2} and incorporating the result from Botha in \cite[Theorem 2.2]{Pa_Bo_99}, we deduce that if $D$ has characteristic 2 and contains at least four elements, every matrix in $\mathrm{M}_n(D)$ can be written as the product of four diagonalizable matrices over $D$. For clarity, we summarize this result as follows:
	
	\begin{theorem}\label{cha 2}
		Let $D$ be a noncommutative division ring of characteristic  $2$ and let $n$ be a positive integer. If  $D$ contains at least four elements, then every matrix in $\mathrm{M}_n(D)$ is expressible as a product of four diagonalizable matrices in $\mathrm{M}_n(D)$.
	\end{theorem}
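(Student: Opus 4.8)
The plan is to imitate the proof of Theorem~\ref{cha khac 2}, the one genuinely new point being that in characteristic $2$ the ``reversal'' factor occurring there is no longer an involution that happens to be diagonalizable, so it must instead be written as a product of two diagonalizable matrices; this is exactly where care is needed when the center $F$ of $D$ equals $\mathbb{F}_2$, since \cite[Theorem~2.2]{Pa_Bo_99} then says nothing about $\mathrm{M}_n(F)$. By Lemma~\ref{companion} and Remark~\ref{remark di} it suffices to handle a companion matrix $A$ as above with $n\geq 2$. Since $D$ is noncommutative it is infinite by Theorem~\ref{Wedderburn}, so $D\setminus\{0,1\}\neq\emptyset$; moreover not every $b\in D\setminus\{0,1\}$ can satisfy $b^2=a_0$, for then $b+1\in D\setminus\{0,1\}$ would give $a_0=(b+1)^2=b^2+1=a_0+1$, which is absurd. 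Hence one may fix $a\in D\setminus\{0,1\}$ with $a^2\neq a_0$. Put $K=F(a^2)$, the division subring of $D$ generated by $a^2$ over $F$; exactly as in the proof of Theorem~\ref{cha khac 2}, $K$ is a field of characteristic $2$, and it has at least four elements: automatically if $|F|\geq 4$, and because $a^2\notin\mathbb{F}_2$ (which follows from $a\notin\{0,1\}$ since a division ring has no nonzero nilpotents) if $F=\mathbb{F}_2$.

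Next I would reuse the factorization already appearing in the proof of Theorem~\ref{cha khac 2}, namely
\[
A=\underbrace{\begin{pmatrix}&&&a^2\\&&1&\\&\iddots&&\\1&&&\end{pmatrix}}_{R'}\ \underbrace{\begin{pmatrix}&&1&a_{n-1}\\&\iddots&&\vdots\\1&&&a_1\\0&\cdots&0&a^{-2}a_0\end{pmatrix}}_{S'}.
\]
Every entry of $R'$ lies in $K$, so by \cite[Theorem~2.2]{Pa_Bo_99} the matrix $R'$ is a product of two matrices diagonalizable over $K$, hence over $D$. Writing $S'=\begin{pmatrix}J_{n-1}&w\\0&a^{-2}a_0\end{pmatrix}$, where $J_{n-1}$ denotes the $(n-1)\times(n-1)$ reversal matrix and $w=(a_{n-1},\ldots,a_1)^T$, the polynomial $p(z)=z^2+1$ with coefficients in $F$ satisfies $p(J_{n-1})=0$, while $p(a^{-2}a_0)$ is invertible because $a^{-2}a_0\neq 1$ (in a division ring of characteristic $2$, $z^2=1$ forces $z=1$). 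Lemma~\ref{cheo khoi} then shows that $S'$ is similar to $J_{n-1}\oplus(a^{-2}a_0)$.

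It remains to prove that for every $c\in D$ the matrix $J_m\oplus(c)$ is a product of two diagonalizable matrices over $D$. Since $J_m^2=\mathrm{I}_m$, the matrix $J_m$ is annihilated by $(z-1)^2$, so it is similar over $\mathbb{F}_2$ to a direct sum of copies of $\begin{pmatrix}1&1\\0&1\end{pmatrix}$ together with at most one copy of $(1)$; hence $J_m\oplus(c)$ is similar over $D$ to such a direct sum with one further block $(c)$. Fixing any $\mu\in D\setminus\{0,1\}$, one checks the identity
\[
\begin{pmatrix}1&1\\0&1\end{pmatrix}=\begin{pmatrix}1&1\\0&\mu\end{pmatrix}\begin{pmatrix}1&1+\mu^{-1}\\0&\mu^{-1}\end{pmatrix},
\]
and by Lemma~\ref{cheo khoi} the two triangular factors are similar to $\mathrm{diag}(1,\mu)$ and $\mathrm{diag}(1,\mu^{-1})$ respectively, hence diagonalizable; trivially $(1)=(1)(1)$ and $(c)=(c)(1)$. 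Gathering the left-hand factors of all these $2\times 2$ and $1\times 1$ products into one block-diagonal matrix and the right-hand factors into another, Remark~\ref{remark di} makes both of them diagonalizable and their product equals the desired direct sum. Thus $J_m\oplus(c)$, and therefore $S'$, is a product of two diagonalizable matrices over $D$, and consequently $A=R'S'$ is a product of four diagonalizable matrices; Lemma~\ref{companion} together with Remark~\ref{remark di} carries this over to every matrix in $\mathrm{M}_n(D)$.

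The step I expect to be the main obstacle is precisely the case $F=\mathbb{F}_2$: there Botha's characteristic-$2$ theorem over fields has no content for $F$ itself, and the role of the rescaling by $a^2$ is exactly that $R'$ then lives over the field $K=F(a^2)$, which is forced to have at least four elements, so that \cite[Theorem~2.2]{Pa_Bo_99} becomes applicable; at the same time the remaining factor $S'$ cannot be pushed into such a field, because its corner entry $a^{-2}a_0$ is an arbitrary element of $D$, which is why it is handled by the elementary $2\times 2$ decomposition above --- valid over any division ring with more than two elements. The small technical inputs on which everything rests are the existence of a suitable $a$ and the verification that $K$ genuinely acquires at least four elements.
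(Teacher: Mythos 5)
Your argument is correct, and it shares the paper's overall skeleton: reduce to companion matrices via Lemma~\ref{companion} and Remark~\ref{remark di}, reuse the two-factor factorization $A=R'S'$ from the proof of Theorem~\ref{cha khac 2}, and write each factor as a product of two diagonalizable matrices, giving four in total. Where you genuinely diverge is in how those two factors are converted. The paper's (sketched) route rests on \cite[Proposition~2.3]{Pa_BiDuHaSo_23} --- in characteristic $2$ every involution in $\mathrm{M}_n(D)$ is similar to a matrix over the center --- followed by an application of Botha's \cite[Theorem~2.2]{Pa_Bo_99}; taken literally, that application requires the field over which the entries live (the center) to have at least four elements, a point the paper leaves implicit. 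You instead apply Botha only to $R'$, over the field $K=F(a^2)$, and your choice $a\notin\{0,1\}$ with $a^2\neq a_0$ forces $|K|\geq 4$ even when $F$ is the field of two elements (since $0,1,a^2,a^2+1$ are distinct); and you dispose of the second factor $S'\sim J_{n-1}\oplus(a^{-2}a_0)$ without Botha at all, via the Jordan form of the reversal matrix over the prime field and the explicit identity writing the $2\times 2$ unitriangular block as a product of two triangular matrices similar to $\mathrm{diag}(1,\mu)$ and $\mathrm{diag}(1,\mu^{-1})$, both justified by Lemma~\ref{cheo khoi}. I checked the details (existence of $a$ with $a^2\neq a_0$ via $b\mapsto b+1$, the vanishing of $z^2+1$ on $J_{n-1}$, invertibility of $p(a^{-2}a_0)$, the $2\times2$ identity in characteristic $2$, and the block-diagonal regrouping), and they are sound. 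The trade-off: the paper's route is shorter but leans on the external involution result and, implicitly, on the size of the center; your route is slightly longer but more self-contained and is robust precisely in the delicate case where the center is the field of two elements, which you correctly identify as the main obstacle.
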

	
	The number $4$ is not sharp. Therefore, we conjecture that the number $2$ is sharp, as this bound is supported by Botha in \cite[Theorem 2.2]{Pa_Bo_99}. We thus leave the following open problem:
	
	\begin{open} \label{open2}
		Can the number $4$ be replaced by $2$ in Theorem~{\rm \ref{cha 2}}? 
	\end{open}
	
	In an attempt to address Problem~\ref{open2}, we will argue that the answer is ``yes" in the subsequent subsection, provided that the center of \( D \) contains sufficiently many elements, regardless of the characteristic of $D$.
	
	Similarly, at the end of Section~\ref{sum}, and by combining the results from \cite[Corollary~2.2]{Pa_Bo_98} and \cite[Theorem 2.2]{Pa_Bo_99}, we can easily verify the validity of the following theorem, which provides an example to consider for Problem~\ref{open2}.
	
	\begin{theorem}\label{nil2}
		Let $D$ be a division ring such that $D$ is not the field of two elements and let $n$ be a positive integer. If a matrix $N\in\mathrm{M}_n(D)$ is nilpotent, then the matrix $N$ can be expressed as a product of two diagonalizable matrices over $D$. Moreover, the number $2$ is sharp.
	\end{theorem}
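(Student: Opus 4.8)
The plan is to reduce to a single nilpotent Jordan block and then split according to whether $D$ is commutative and according to its characteristic. If $D$ is commutative then it is a field with at least three elements, and the statement is precisely \cite[Corollary~2.2]{Pa_Bo_98} when the characteristic is different from $2$ and \cite[Theorem~2.2]{Pa_Bo_99} when the characteristic is $2$ (in the latter case $D$ automatically has at least four elements). So from now on assume $D$ is noncommutative; then $D$ is infinite by Theorem~\ref{Wedderburn}. By \cite[Theorem~5]{Pa_Mo_12} there is $P\in\mathrm{GL}_n(D)$ with $P^{-1}NP=\bigoplus_{i=1}^{s}J_{m_i}(0)$, so by Remark~\ref{remark di} it suffices to write each Jordan block $J_m(0)$ as a product of two diagonalizable matrices over $D$; the case $m=1$ is trivial since $(0)=(0)(0)$.

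For the case in which $D$ has characteristic different from $2$, I would argue exactly as in the proof of Theorem~\ref{cha khac 2}. Let $R_m\in\mathrm{M}_m(D)$ denote the anti-identity (reversal) matrix; it is an involution, hence diagonalizable by \cite[Proposition~2.3]{Pa_BiDuHaSo_23}. A direct computation gives the factorization $J_m(0)=R_m\cdot\bigl((0)\oplus R_{m-1}\bigr)$, and $(0)\oplus R_{m-1}$ is diagonalizable by the same proposition together with Remark~\ref{remark di}. This disposes of the characteristic $0$ and odd characteristic cases with no further hypothesis on $D$ (in particular it reproves the commutative instance as well).

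The remaining, and in my view the main, difficulty is the case where $D$ is noncommutative of characteristic $2$: here the reversal matrix is no longer diagonalizable and the technique of Theorem~\ref{cha khac 2} fails. To handle it I would instead produce an infinite subfield $K$ of $D$ of characteristic $2$. If the center $F$ of $D$ is infinite, take $K=F$. If $F$ is finite, then by \cite[(13.11) Theorem (Jacobson)]{Bo_La_91} the division ring $D$ is not algebraic over $F$, so $D$ contains an element $t$ transcendental over $F$; since $F[t]$ is commutative, the set of quotients $f(t)g(t)^{-1}$ with $f,g\in F[t]$ and $g\neq 0$ forms a subfield $K$ of $D$ isomorphic to the rational function field over $F$. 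In either case $K$ is infinite, so $|K|\geq 4$, and since the entries of $J_m(0)$ lie in $\{0,1\}\subseteq K$ we may view $J_m(0)\in\mathrm{M}_m(K)$. By \cite[Theorem~2.2]{Pa_Bo_99}, $J_m(0)$ is a product of two matrices in $\mathrm{M}_m(K)$ that are diagonalizable over $K$, and the conjugating matrices realizing this also witness their diagonalizability over $D$. This completes the existence statement.

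Finally, the sharpness of the number $2$ is immediate: a diagonalizable matrix over a division ring that is nilpotent must be $0$, since it is similar to a diagonal matrix each of whose diagonal entries is a nilpotent element of $D$ and hence $0$; thus no nonzero nilpotent matrix — for example $J_2(0)\in\mathrm{M}_2(D)$ — can be written as a single diagonalizable matrix, so two factors are genuinely necessary. I expect the only genuinely computational point to be the verification of the identity $J_m(0)=R_m\cdot\bigl((0)\oplus R_{m-1}\bigr)$ and the routine checks that $F[t]$ generates a subfield of $D$; the conceptual step worth flagging is that the characteristic $2$ noncommutative case cannot be treated by the involution argument of the preceding subsection and must instead be routed through an auxiliary commutative subfield together with Botha's field-theoretic result.
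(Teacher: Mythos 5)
Your proof is correct, and it follows the paper's high-level route (reduce $N$ via \cite[Theorem~5]{Pa_Mo_12} and Remark~\ref{remark di} to Jordan blocks $J_m(0)$, then invoke Botha), but it genuinely improves on the paper's one-line justification in two places. First, for characteristic different from $2$ you replace the citation of \cite[Corollary~2.2]{Pa_Bo_98} by the explicit factorization $J_m(0)=R_m\bigl((0)\oplus R_{m-1}\bigr)$ with $R_m$ the reversal involution; I checked the identity (the $(i,j)$ entry of the product is $1$ exactly when $j=i+1$), and diagonalizability of both factors follows from \cite[Proposition~2.3]{Pa_BiDuHaSo_23} and Remark~\ref{remark di}, so this case is self-contained, works uniformly for commutative and noncommutative $D$, and in particular absorbs the field of three elements without any case distinction. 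Second, and more importantly, you make explicit the step the paper glosses over: when $D$ is noncommutative of characteristic $2$ with center (hence prime field) equal to $\mathbb{F}_2$, Botha's \cite[Theorem~2.2]{Pa_Bo_99} cannot be applied to the prime field over which $J_m(0)$ naturally lives, and your construction of a subfield $K\subseteq D$ with at least four elements (the center if infinite, otherwise $F(t)$ for $t$ transcendental, whose existence follows from Jacobson's theorem) is exactly what is needed to legitimize the application of Botha's result, since $K$-diagonalizability implies $D$-diagonalizability. A small simplification is available there: any $x\in D\setminus\{0,1\}$ already generates a subfield $\mathbb{F}_2(x)$ with at least four elements, so Jacobson's theorem is not strictly needed. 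Your sharpness argument (a diagonalizable nilpotent matrix over a division ring is zero, so $J_2(0)$ requires two factors) is also correct and supplies a justification the paper only asserts.
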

	
	Furthermore, in the case where \( D = F \) is the field with two elements, \cite[Corollary~3.2]{Pa_Bo_98} establishes that the only nonsingular matrix in \( \mathrm{M}_n(D) \) that can be expressed as a product of diagonalizable matrices in \( \mathrm{M}_n(D) \) is \( \mathrm{I}_n \). Interestingly, despite this restrictive result, \cite[Theorem 4.6]{Pa_Bo_98} shows that every singular matrix over \( D \) can still be expressed as a product of \( t \) diagonalizable matrices, where \( t \) depends on some conditions.

	\subsection{Division rings with center containing sufficiently many elements}

	Let \( F \) denote the center of \( D \), where \( F \) contains a sufficiently large number of elements. Motivated by \cite[Theorem 3.3]{Pa_Bo_98}, consider the following: for $$ b_0, b_1, \ldots, b_{n-1}, c_0, c_1, \ldots, c_{n-1} \in D,$$ it is straightforward to verify that  
	\[
	\begin{pmatrix}
		0 & & & b_0 \\
		1 & & & b_1 \\
		& \ddots & & \vdots \\
		0 & & 1 & b_{n-1}
	\end{pmatrix}
	\begin{pmatrix}
		1 & & & & c_0 \\
		& 1 & & & c_1 \\
		& & \ddots & & \vdots \\
		& & & 1 & c_{n-2} \\
		& & & & c_{n-1}
	\end{pmatrix}
	=
	\begin{pmatrix}
		0 & & & b_0c_{n-1} \\
		1 & & & c_0 + b_1c_{n-1} \\
		& \ddots & & \vdots \\
		0 & & 1 & c_{n-2} + b_{n-1}c_{n-1}
	\end{pmatrix}.
	\]
	Comparing this expression with \( A \), it is necessary to verify whether \( b_0c_{n-1} = a_0 \) and \( c_{i-1} + b_ic_{n-1} = a_i \) for all \( i \in \{1, 2, \ldots, n-1\} \). Additionally, since we aim to express \( A \) as a product of two diagonalizable matrices, we require both factors in the product to be diagonalizable. By Lemma~\ref{cheo khoi}, this necessitates assuming \( c_{n-1} \neq 1 \). For the first factor, we will select \( b_0, b_1, \ldots, b_{n-1} \in F \) such that its characteristic polynomial has \( n \) distinct roots in \( F \).
	
	With the discussion outlined above, we proceed to consider the possibility. Since \( F \) contains a sufficiently large number of elements, there exist  \( b_0, b_1, \ldots, b_{n-1} \in F \) such that the characteristic polynomial of the matrix  
	\[
	\begin{pmatrix}
		0 & & & b_0 \\ 
		1 & & & b_1 \\ 
		& \ddots & & \vdots \\ 
		0 & & 1 & b_{n-1}
	\end{pmatrix}
	\]  
	has \( n \) distinct roots in \( F \). With these choices of \( b_i \)'s, the \( c_i \)'s are constructed as follows: let \( c_{n-1} = b_0^{-1}a_0 \), where we additionally assume \( b_0 \neq 0 \), which is a feasible condition given our assumptions. For each \( i \in \{1, 2, \ldots, n-1\} \), let \( c_{i-1} = a_i - b_ic_{n-1} \).  To ensure that \( c_{n-1} \neq 1 \), we assume \( b_0 \neq a_0 \), which is possible since \( F \) contains a sufficiently large number of elements.
	
	Drawing from the above discussion, we proceed to state the following result, which stands as one of the central contributions of this paper.
	
	\begin{theorem}\label{enough}
		Let $D$ be a division ring and let $n$ be a positive integer. If the center of $D$ contains sufficiently many elements, then every matrix in $\mathrm{M}_n(D)$ is expressible as a product of two diagonalizable matrices in $\mathrm{M}_n(D)$. Moreover, the number $2$ is sharp.
	\end{theorem}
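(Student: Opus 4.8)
The plan is to carry out the factorization sketched in the discussion just above the statement, filling in the three points it leaves implicit: the reduction to companion matrices, the diagonalizability of the companion factor, and the diagonalizability of the ``almost identity'' factor.

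\textbf{Reduction to companion matrices.} By Lemma~\ref{companion} and Remark~\ref{remark di}, it suffices to handle a companion matrix $A\in\mathrm{M}_n(D)$: writing $A=P\bigl(\bigoplus_{i=1}^{t}C_i\bigr)P^{-1}$ with each $C_i$ companion and each $C_i=D_iE_i$ a product of two diagonalizable matrices over $D$, one gets $A=\bigl(P(\bigoplus_iD_i)P^{-1}\bigr)\bigl(P(\bigoplus_iE_i)P^{-1}\bigr)$, and both factors are diagonalizable by Remark~\ref{remark di}. The case $n=1$ is trivial, so I would assume $n\ge2$ and let $a_0,\ldots,a_{n-1}\in D$ denote the entries of the last column of $A$.

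\textbf{Choosing the factors.} Put $F:=Z(D)$ and, following the matrix identity recorded before the statement, aim for $A=CU$ where
\[
C=\begin{pmatrix}0&&&b_0\\1&&&b_1\\&\ddots&&\vdots\\0&&1&b_{n-1}\end{pmatrix},\qquad
U=\begin{pmatrix}1&&&&c_0\\&1&&&c_1\\&&\ddots&&\vdots\\&&&1&c_{n-2}\\&&&&c_{n-1}\end{pmatrix};
\]
this is an equality exactly when $b_0c_{n-1}=a_0$ and $c_{i-1}=a_i-b_ic_{n-1}$ for $1\le i\le n-1$. The first move is to pick $b_0,\ldots,b_{n-1}\in F$ so that the characteristic polynomial $t^n-b_{n-1}t^{n-1}-\cdots-b_1t-b_0$ of $C$ equals $\prod_{j=1}^n(t-\lambda_j)$ for distinct \emph{nonzero} $\lambda_j\in F$; since $F$ has sufficiently many elements this is possible, and it forces $b_0=(-1)^{n+1}\lambda_1\cdots\lambda_n\ne0$. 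Moreover, by perturbing one of the $\lambda_j$ one can in addition arrange $b_0\ne a_0$ (automatic when $a_0\notin F$, and otherwise ruling out only a single value of $b_0$). Then set $c_{n-1}=b_0^{-1}a_0$ and $c_{i-1}=a_i-b_ic_{n-1}$, which yields a genuine identity $A=CU$.

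\textbf{Both factors are diagonalizable.} The matrix $C$ lies in $\mathrm{M}_n(F)$ and is a companion matrix, so its minimal polynomial over the field $F$ coincides with its characteristic polynomial $\prod_j(t-\lambda_j)$, which splits into distinct linear factors; hence $C$ is diagonalizable over $F$, and a fortiori diagonalizable in $\mathrm{M}_n(D)$, the conjugating matrix lying in $\mathrm{GL}_n(F)\subseteq\mathrm{GL}_n(D)$. For $U$, observe that it is block upper triangular with diagonal blocks $\mathrm{I}_{n-1}$ and $(c_{n-1})$; taking $p(z)=z-1\in F[z]$ we have $p(\mathrm{I}_{n-1})=0$, while $p(c_{n-1})=c_{n-1}-1$ is invertible since $c_{n-1}=b_0^{-1}a_0\ne1$ (because $b_0\ne a_0$). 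Lemma~\ref{cheo khoi} then gives that $U$ is similar to $\mathrm{I}_{n-1}\oplus(c_{n-1})$, which is diagonal, so $U$ is diagonalizable by Remark~\ref{remark di}. This completes the decomposition $A=CU$.

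\textbf{Sharpness and the main difficulty.} For sharpness one only needs a matrix that is not diagonalizable at all, e.g. the single nilpotent Jordan block $J_n(0)$ (a diagonalizable nilpotent matrix is similar to a diagonal nilpotent matrix, hence zero), which cannot be a product of one diagonalizable matrix; thus $2$ cannot be lowered to $1$. The only step carrying real content — everything else is formal bookkeeping or a direct appeal to Lemma~\ref{cheo khoi} — is the diagonalizability of the companion factor $C$: it amounts to producing a monic degree-$n$ polynomial over the center $F$ with $n$ distinct nonzero roots whose constant term avoids $a_0$, and this is exactly what the hypothesis that $F$ has sufficiently many elements provides (concretely, $|F|\ge n+2$ suffices).
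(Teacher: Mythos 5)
Your proposal is correct and follows essentially the same route as the paper: the reduction to companion matrices via Lemma~\ref{companion} and Remark~\ref{remark di}, the factorization $A=CU$ with $C$ a companion matrix over the center whose characteristic polynomial has $n$ distinct roots (with $b_0\neq 0$, $b_0\neq a_0$) and $U$ the near-identity factor handled by Lemma~\ref{cheo khoi} with $c_{n-1}\neq 1$. You merely make explicit a few points the paper leaves implicit (diagonalizability of $C$ via its minimal polynomial, the choice $p(z)=z-1$ for $U$, and sharpness via a nonzero nilpotent Jordan block), all of which are fine.
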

	
	An alternative proof for Theorem~\ref{enough} can be found in \cite{Pa_DuSo_25}, though it is more complex than the one presented above. Since \cite{Pa_DuSo_25} is yet to be published, we provide a brief proof here for the sake of completeness.
	
	Let $F$ be the center of $D$. Since  $F$ contains sufficiently many elements, we can choose distinct elements $\lambda_1,\ldots,\lambda_n\in F$ and take $B=\mathrm{diag}(\lambda_1,\ldots,\lambda_n)\in\mathrm{M}_n(F)$. Now, let $A\in\mathrm{M}_n(D)$. The claim is trivial by the result of Botha if $A$ has the form: $A=\lambda\mathrm{I}_n$ where $\lambda\in F$. Hence, we focus on the case where $$A\notin\{\lambda\mathrm{I}_n\mid\lambda\in F\}.$$ First, we consider that $A\in\mathrm{GL}_n(D)$. Taking \cite[Theorem 2.1]{Pa_EgGo_19} into account,  there exists $P\in \mathrm{GL}_n(D)$ such that $$P^{-1}AP=UHV$$ where $U\in \mathrm{LT}_n(D), V\in \mathrm{UT}_n(D)$ and $H=\mathrm{diag}(1,\dots,1,h)$ for some $h$ in $D\setminus\{0\}$. Combining the observation $P^{-1}AP=(UB)(B^{-1}HV)$ with a similar argument in \cite[Lemma 2.1]{Pa_DuSo_25_1}, we conclude that both $UB$ and $B^{-1}HV$ are diagonalizable matrices in $\mathrm{M}_n(D)$; hence  $A$ is a product of two diagonalizable matrices, as promised. The remaining case is that $A\notin\mathrm{GL}_n(D)$. In this case, if $A$ is nilpotent, then this completes the proof by Theorem~\ref{nil2}. Our final focus is on the case where $A$ is not nilpotent. According to  \cite[Theorem 15, Page 28]{Bo_Ja_43}, there exists $P\in\mathrm{GL}_n(D)$ such that $$P^{-1}AP=\begin{pmatrix}
		A_1&0\\0&A_2
	\end{pmatrix}$$ where $A_1\in\mathrm{GL}_{n-t}(D)$ and $A_2\in\mathrm{M}_{t}(D)$ is nilpotent for some positive integer $t$. Following the arguments above, we conclude that  $A_1$ and $A_2$ can each be represented as products of two diagonalizable matrices in $\mathrm{M}_{n-t}(D)$ and $\mathrm{M}_t(D)$, respectively, implying $A\in\mathrm{M}_n(D)$ can also be represented similarly. These assertions complete the proof.	
	
	For the readers' convenience and to facilitate referencing the upcoming section, we conclude this section by briefly summarizing the results obtained so far.
	
	\begin{theorem}\label{section diagonalizable}
		Let $D$ be a  division ring with center $F$, and let $n$ be a positive integer. Then, every matrix $A$ in $\mathrm{M}_n(D)$ can be expressed as a sum  of $t$ or a product of $\ell$ diagonalizable matrices in $\mathrm{M}_n(D)$, where
		\begin{enumerate}[\rm (i)]
			\item $t=2$ in either of the following cases:
			\begin{enumerate}[\rm (a)]
				\item 	$F$ contains at least three elements.
				\item $A$ is nilpotent.
				\item $n=2$ and $D$ is noncommutative.
			\end{enumerate}
			\item $t=3$ if $F$ is the field of two elements.
			\item $\ell=2$ in either of the following cases:
			\begin{enumerate}[\rm (a)]
				\item $A$ is nilpotent.
				\item The characteristic of $D$ is different from $2$, and $D$ is not the field of three elements.
				\item The characteristic of $D$ is $2$, and $D=F$ is not the field of two elements.
				\item $F$ contains sufficiently many
				elements.
			\end{enumerate}
			\item $\ell=3$ if $D$ is the field of three elements.
			\item $\ell=4$ if the characteristic of $D$ is $2$, and $D\neq F$.
		\end{enumerate}Moreover, in the case where $D=F$ is the field of two elements, the nonsingular matrix in $\mathrm{M}_n(D)$, which is expressible as a product of diagonalizable matrices in $\mathrm{M}_n(D)$, is only $\mathrm{I}_n$.
	\end{theorem}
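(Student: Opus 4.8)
The plan is to assemble the statement clause by clause out of the results already established; since no sharpness of the bounds $t$ and $\ell$ is asserted here, the only work is to check that each clause has a ready source and that the case divisions are jointly exhaustive and mutually consistent.

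For the sum clauses: clause~(i)(a) is exactly Theorem~\ref{sum1}, clause~(i)(b) is Theorem~\ref{sum3}, clause~(i)(c) is the two-by-two statement recorded just before Section~\ref{product}, and clause~(ii) is Theorem~\ref{sum2}. In each of these the commutative subcase is already subsumed, since the cited theorems invoke Botha's \cite[Theorem~2.6]{Pa_Bo_20}. Because the center of any division ring has either exactly two elements or at least three, clauses~(i)(a) and~(ii) together already settle the sum part for every $D$, while (i)(b) and (i)(c) record the additional situations in which $t=2$ survives when $F$ is the field of two elements.

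For the product clauses: clause~(iii)(a) is Theorem~\ref{nil2} (together with the corresponding field result for nilpotent matrices when $D$ is the field of two elements); clause~(iii)(b) is Theorem~\ref{cha khac 2}; clause~(iii)(c), whose hypothesis $D=F$ makes $D$ commutative, is Botha's \cite[Theorem~2.2]{Pa_Bo_99}; and clause~(iii)(d) is Theorem~\ref{enough}. Clause~(iv) is the remark immediately following Theorem~\ref{cha khac 2}, which rests on \cite[Theorem~2.4]{Pa_Bo_98}, and clause~(v) is Theorem~\ref{cha 2}. Finally, the concluding assertion --- that over the field of two elements the only nonsingular matrix expressible as a product of diagonalizable matrices is $\mathrm{I}_n$ --- is \cite[Corollary~3.2]{Pa_Bo_98}.

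Since nothing is constructed anew, the closest thing to an obstacle is keeping the characteristic and commutativity dichotomies aligned: one must verify that the noncommutative characteristic-$2$ case is asserted only with $\ell=4$ (clause~(v)), consistent with Problem~\ref{open2} being left open; that clause~(iii)(c) is read as the commutative characteristic-$2$ case rather than as Theorem~\ref{cha 2}; and that the field of two elements receives a finite bound only for sums (clause~(ii)) and not for products, in accordance with the final ``Moreover''. Once these matches are checked, the proof is complete.
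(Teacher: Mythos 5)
Your proposal matches the paper's treatment exactly: Theorem~\ref{section diagonalizable} is presented there as a summary with no separate proof, and it is assembled from precisely the sources you cite (Theorems~\ref{sum1}, \ref{sum2}, \ref{sum3}, the $2\times2$ result before Section~\ref{product}, Theorems~\ref{cha khac 2}, \ref{cha 2}, \ref{nil2}, \ref{enough}, the remark after Theorem~\ref{cha khac 2}, and Botha's field results \cite[Theorem 2.2]{Pa_Bo_99}, \cite[Theorem 2.4]{Pa_Bo_98}, \cite[Corollary 3.2]{Pa_Bo_98}). The only delicate point, which your parenthetical correctly flags, is that clause (iii)(a) over the field of two elements is not literally covered by Theorem~\ref{nil2} (which excludes that field) — a slight imprecision present in the paper's own statement rather than in your assembly.
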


	\section{Some Waring-type results for images of polynomials}\label{Waring section}
	
	\subsection{Division rings with radicable multiplicative groups}\label{radicable}
	
	For convenience to describe our results, we begin by recalling the concept of a division ring with radicable multiplicative group, along with its motivation. A question in the theory of non-commutative rings is identifying which groups can serve as the multiplicative group of a division ring. This question originates from the work of Wedderburn, who famously proved that every finite division ring is a finite field. Kaplansky later extended this by showing that every center-by-periodic division algebra is commutative. Additionally, Hua demonstrated that the multiplicative group of any non-commutative division ring is insoluble, highlighting its inherent non-commutative nature. These results have been further generalized to include subnormal subgroups, as well as maximal and irreducible subgroups of the multiplicative group of division rings. See \cite{Pa_Ha_14} in detail.
	
	In 2011, Mahdavi-Hezavehi and Motiee introduced in \cite{Pa_Ma_11} the study of division rings whose multiplicative groups are radicable, which are referred to as \textit{radicable division rings}. In group theory, a group \( G \) is \textit{radicable} if, for every \( a \in G \) and each positive integer \( k \), there exists \( b \in G \) such that \( a = b^k \).  With this in mind, a division ring \( D \) is called \textit{radicable} if its multiplicative group, \( D^\times=D \setminus \{0\} \), is radicable. Equivalently, \( D \) is radicable if every equation of the form \( x^k - a = 0 \), where \( a \in D \) and \( k \) is a positive integer, has a solution in \( D \). Notably, it is straightforward to verify that such a \( D \) must either be an infinite division ring or the field with two elements. Indeed, if \( D \) is a finite division ring with \( k \) denoting the number of nonzero elements of $D$, and if \( a \in D \setminus \{0\} \), then by the definition of radicable division rings and the properties of \( k \), it follows that \( a = b^k = 1 \) for some \( b \in D \setminus \{0\} \), implies that \( D = \{0, 1\} \), thereby confirming the claim. It is natural to wonder whether the center of a radicable division ring must either be infinite or consist of only two elements.
	
	However, there exists a radicable division ring with finite center; here the center is not necessarily the field of two elements. Such an example can be constructed as follows. According to \cite{Pa_Co_73} and page 308 of \cite{Bo_Co_95}, an \emph{existentially closed division ring} over a field $k$ (or \emph{EC-division ring}, for short) is a division ring $D$ that is a $k$-algebra, satisfying the property that any existential sentence with constants from $D$, which holds in some extension of $D$, already holds in $D$ itself. For instance, if two matrices are similar over an extension of an EC-division ring $D$, then they are also similar over $D$ itself.  
	
	It is shown in \cite[Theorem 6.5.3]{Bo_Co_95} that if $K$ is a division ring with $k$ as a central subfield, then there exists an EC-division ring $D$ (over $k$) containing $K$, in which every finite consistent set of equations over $K$ has a solution. Here, a consistent set of equations is one that has a solution in some \textbf{extension} division ring. Furthermore, by \cite[Corollary 6.5.6]{Bo_Co_95}, the center of an EC-division ring $D$ over $k$ is precisely $k$.  
	
	Additionally, by \cite[Theorem 8.5.1]{Bo_Co_95}, any left-sided polynomial equation in one variable with coefficients in a division ring has a solution in some extension of the division ring. In particular, the equation $x^m - a = 0$, where $m$ is a positive integer and $a \in D$, is always consistent in the above sense. Hence, $D$ is a radicable division ring. To obtain an example with finite center, we choose $k$ to be a finite field, such as a prime subfield of $K$ of positive characteristic.  
	
	Notably, the assumption that every finite consistent set of equations over $K$ has a solution is not required in this construction; rather, what is essential is the existence of an EC-division ring over a finite field. We are grateful to a user on Mathematics Stack Exchange, in Question 5030291, for bringing this result to our attention.
	
	In summary, let $K$ be a division ring of positive characteristic. We then can choose $k$ as a finite field in the center of $K$. From Cohn’s book, we know that there exists an EC-division ring, denoted by $D$, that contains $K$. Furthermore, the equation $x^m - a = 0$, where $m$ is a positive integer and $a \in D$, always has a solution in some extension of $D$. Additionally, since $D$ is an EC-division ring, the equation $x^m - a = 0$ always has a solution within $D$ itself. Moreover, the center of $D$ is exactly $k$, which gives the desired example.
	
	In the finite-dimensional case, radicable noncommutative division rings are well understood in \cite[Theorem 3.18]{Pa_Ma_11} and at the end of the paper \cite{Pa_Ma_11}. Anyway, we only need to focus on the fact that it always has characteristic $0$, and such rings with indivisible center $F$ are precisely ordinary quaternion division rings and $F$ is a radically real-closed field. However, the structure of infinite-dimensional radicable division rings remains largely unexplored and represents an open problem.

	By combining the definition of radicable  division rings with Theorem~\ref{section diagonalizable}, we now present the following statement concerning \( k \)-th powers, which can be viewed as a version of the classical Waring problem for matrix rings. Since the correctness of this statement can be easily verified, we omit the proof. Moreover, in the spirit of the classical Waring problem, we state the result for the best bound, which is the number $2$.
	
	\begin{theorem}\label{diagonal map}
		Let $D$ be a radicable division ring with infinite center, and let $n$ be a positive integer. If $k$ is a positive integer, then every matrix  in $\mathrm{M}_n(D)$ can be expressed as a sum or a product  of two $k$-powers in $\mathrm{M}_n(D)$.
	\end{theorem}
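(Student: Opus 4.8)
The plan is to reduce everything to the structure theorem of the previous section together with the defining property of radicability. Write $F$ for the center of $D$. Since $F$ is infinite, it certainly contains at least three elements and, more than that, \emph{sufficiently many} elements in the sense demanded by Theorem~\ref{enough}; consequently both clause (i)(a) and clause (iii)(d) of Theorem~\ref{section diagonalizable} apply to $D$. Thus, given any $A \in \mathrm{M}_n(D)$, I may fix a decomposition $A = A_1 + A_2$ with $A_1, A_2 \in \mathrm{M}_n(D)$ diagonalizable, and a decomposition $A = B_1 B_2$ with $B_1, B_2 \in \mathrm{M}_n(D)$ diagonalizable.

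The only remaining ingredient is the observation that, over a radicable division ring, \emph{every} diagonalizable matrix is a $k$-th power in $\mathrm{M}_n(D)$. I would prove this directly: if $C \in \mathrm{M}_n(D)$ is diagonalizable, write $C = P\,\mathrm{diag}(d_1, \ldots, d_n)\,P^{-1}$ with $P \in \mathrm{GL}_n(D)$ and $d_i \in D$; by radicability choose $e_i \in D$ with $e_i^k = d_i$ for each $i$ (take $e_i = 0$ when $d_i = 0$); then, since $X \mapsto PXP^{-1}$ is a ring automorphism of $\mathrm{M}_n(D)$ and hence preserves $k$-th powers, the matrix $E := P\,\mathrm{diag}(e_1, \ldots, e_n)\,P^{-1}$ satisfies $E^k = P\,\mathrm{diag}(e_1^k, \ldots, e_n^k)\,P^{-1} = C$. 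So $C = E^k$ is a $k$-th power in $\mathrm{M}_n(D)$.

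Combining the two steps, $A = A_1 + A_2 = E_1^k + E_2^k$ and $A = B_1 B_2 = (E_1')^k (E_2')^k$ for suitable $E_1, E_2, E_1', E_2' \in \mathrm{M}_n(D)$, which is exactly the assertion. I do not expect a genuine obstacle here; the argument is essentially bookkeeping. The only two points requiring any care are (a) confirming that the infiniteness of $F$ is enough to trigger the relevant cases of Theorem~\ref{section diagonalizable}, in particular the ``sufficiently many elements'' hypothesis of the product case, and (b) noting that the diagonal $k$-th roots produced by radicability can be collected into a single diagonal matrix and conjugated back, which is immediate because conjugation respects products. I emphasize that the reverse implication (a $k$-th power need not be diagonalizable) is neither needed nor true in general; only the one-directional fact ``diagonalizable $\Rightarrow$ $k$-th power'' over a radicable division ring is used, which is why the proof can be omitted as routine.
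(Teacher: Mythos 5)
Your proposal is correct and follows exactly the route the paper intends (and omits as routine): since the infinite center triggers cases (i)(a) and (iii)(d) of Theorem~\ref{section diagonalizable}, one decomposes $A$ into two diagonalizable matrices and then uses radicability to take entrywise $k$-th roots of the diagonal form (with $0=0^k$) and conjugate back, so each diagonalizable matrix is a $k$-th power. No gaps; the handling of zero diagonal entries and the conjugation step are precisely the points the paper leaves to the reader.
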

	
	An ordinary quaternion division ring \( D \) with  real-closed center is known to be a radicable division ring with  infinite center. Notably, it was demonstrated in \cite[Proposition 3.7]{Pa_Fa_18} that every nonsingular matrix in \( \mathrm{M}_n(D) \) can be expressed as a \( k \)-power for any positive integer \( k \). On the other hand, nilpotent matrices do not generally possess this property, even in the case where \( k = 2 \).

	Building on Theorem~\ref{diagonal map}, it is straightforward to verify that every matrix over \( D \) can be expressed as a linear combination of two \( k \)-powers over \( D \), with the two scalars chosen arbitrarily, as long as they are nonzero. Notably, Theorem~\ref{diagonal map}, drawing inspiration from the research projects of A. Singh, provides a framework for investigating the image of a polynomial map known as the diagonal map, introduced in a preprint on arXiv \cite{Pa_Pa_24_1}. It also offers an intriguing perspective in the broader context of noncommutative division rings. As the validity of this statement is readily verifiable, we omit the proof and present the result below. To set the stage, we first recall the concept of diagonal polynomials. Let \( D \) be a division ring with center \( F \). Given any positive integers \( m, k_1, k_2, \dots, k_m \) and any nonzero scalars \( \lambda_1, \lambda_2, \dots, \lambda_m \in F \), we consider the polynomial  
	\[
	f(x_1, x_2, \dots, x_m) = \lambda_1 x_1^{k_1} + \lambda_2 x_2^{k_2} + \dots + \lambda_m x_m^{k_m},
	\]
	which is commonly referred to as a \textit{diagonal} polynomial in the non-commuting variables \( x_1, x_2, \dots, x_m \). As is well known, this polynomial induces the mapping  
	\[
	\widetilde{f} : \mathrm{M}_n(D)^m \to \mathrm{M}_n(D), \quad (A_1, A_2, \dots, A_m) \mapsto f(A_1, A_2, \dots, A_m).
	\]  
	A natural question is to determine the size of its image, particularly whether \( \widetilde{f} \) is surjective, and to find the smallest value of \( m \) for which this holds.  
	
	The study of such polynomials has attracted considerable interest. In the special case where \( k_1 = k_2 = \dots = k_m = k \), the polynomial \( f \) is known as a \textit{\( k \)-form}. Such a \( k \)-form is said to be \textit{universal} on \( \mathrm{M}_n(D) \) if the map \( \widetilde{f} \) is surjective. A notable instance of this problem is the classical Waring-type problem for matrices, which considers the case \( \lambda_1 = \lambda_2 = \dots = \lambda_m = 1 \) for a \( k \)-form and seeks the smallest \( m \) for which the form is universal, i.e., \( \widetilde{f} \) is surjective.  
	
	The universality of quadratic forms (the case \( k = 2 \)) over fields is a well-studied problem, with significant connections to arithmetic aspects. Notable contributions in this direction include the works of Bhargava \cite{Bo_Bha_00, Pa_Bha_16} and Voloch \cite{Pa_Vo_85}. More broadly, problems concerning the images of polynomial maps on algebras have been investigated by Kanel-Belov, Yavich, Kunyavskii, Rowen, and others, with a survey of recent developments available in \cite{Pa_Ka_20}.  
	
	With this background in place, we now present the following theorem, as mentioned earlier.

	\begin{theorem}\label{dia1}
		Let $D$ be a radicable division ring with infinite center, and let $n$ be a positive integer. If $k$ is a positive integer, then every matrix in $\mathrm{M}_n(D)$ can be expressed as a linear combination  of two $k$-powers in $\mathrm{M}_n(D)$, with the two scalars chosen arbitrarily but must be nonzero.  Moreover, the map induced by a diagonal polynomial in $m$ non-commuting variables, as described above,  is surjective for all $m\geq2$.
	\end{theorem}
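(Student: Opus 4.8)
The plan is to derive both assertions directly from Theorem~\ref{sum1} together with a single elementary observation about radicable division rings. Let $F$ denote the center of $D$. The observation is: over a radicable $D$, every diagonalizable matrix $S \in \mathrm{M}_n(D)$ is a $k$-th power for every positive integer $k$, and for every $\lambda \in F \setminus \{0\}$ the matrix $\lambda^{-1}S$ is again diagonalizable. The second point is immediate, since writing $S = P^{-1}\mathrm{diag}(d_1,\dots,d_n)P$ and using that $\lambda^{-1}$ is central gives $\lambda^{-1}S = P^{-1}\mathrm{diag}(\lambda^{-1}d_1,\dots,\lambda^{-1}d_n)P$. The first point follows because, choosing $e_i \in D$ with $e_i^k = d_i$ (possible precisely because $D$ is radicable, taking $e_i=0$ when $d_i=0$), one gets $S = (P^{-1}\mathrm{diag}(e_1,\dots,e_n)P)^k$.

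For the first statement, I would start from an arbitrary $A \in \mathrm{M}_n(D)$ and arbitrary nonzero $\lambda,\mu \in F$. Since $F$ is infinite it has at least three elements, so Theorem~\ref{sum1} provides diagonalizable $S_1,S_2 \in \mathrm{M}_n(D)$ with $A = S_1 + S_2$. By the observation, $\lambda^{-1}S_1$ and $\mu^{-1}S_2$ are diagonalizable, hence $k$-th powers, say $\lambda^{-1}S_1 = B^k$ and $\mu^{-1}S_2 = C^k$; then $A = \lambda B^k + \mu C^k$, as required, with the coefficients prescribed in advance.

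For the ``moreover'' part, given a diagonal polynomial $f(x_1,\dots,x_m) = \lambda_1 x_1^{k_1} + \dots + \lambda_m x_m^{k_m}$ with $m \geq 2$ and all $\lambda_i \in F \setminus \{0\}$, I would run exactly the same argument using the two exponents $k_1$ and $k_2$ (legitimate since radicability is available for every exponent) to produce $A_1,A_2 \in \mathrm{M}_n(D)$ with $A = \lambda_1 A_1^{k_1} + \lambda_2 A_2^{k_2}$, and then set $A_3 = \dots = A_m = 0$; those extra contributions vanish because each $k_i \geq 1$. Thus $\widetilde{f}(A_1,\dots,A_m) = A$ and $\widetilde{f}$ is surjective for all $m \geq 2$.

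I do not expect a genuine obstacle here: the statement is essentially a repackaging of Theorems~\ref{sum1} and~\ref{diagonal map}. The only subtlety worth flagging is that radicability supplies $k$-th roots inside $D$, not inside the (possibly non-radicable) center $F$; so one must not try to absorb $\lambda$ into the power as $(\lambda^{1/k})^k$, but instead keep $\lambda$ outside and use the invariance of diagonalizability under scaling by a nonzero central element. The hypothesis that $F$ is infinite is used solely to license the appeal to Theorem~\ref{sum1}.
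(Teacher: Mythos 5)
Your proposal is correct and follows essentially the route the paper intends: the paper omits the proof as ``readily verifiable,'' deriving it from Theorem~\ref{diagonal map}, which itself rests on the decomposition into two diagonalizable matrices (Theorem~\ref{sum1}/Theorem~\ref{section diagonalizable}) plus radicability, exactly as you argue. Your explicit observations---that scaling by a nonzero central element preserves diagonalizability and that a diagonalizable matrix is a $k$-th power for every $k$ over a radicable $D$---together with setting the remaining variables to zero for the diagonal-polynomial claim, fill in precisely the details the paper leaves to the reader.
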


	Regarding the Waring problem in groups, the study of products of square elements has a long history, possibly beginning with the work of R. Lyndon, T. McDonough, and M. Newman in 1973 \cite{Pa_Ly_73, Pa_Ly_73_1}. Since then, further developments have been made in \cite{Pa_Ak_01, Pa_Ak_02, Pa_Ak_10, Pa_Sa_04, Pa_Li_12,Pa_Bi_25}, continuing to the present day.  Waring problem for the special linear group \( \mathrm{SL}_n(D) \), where \( D \) is a division ring with center \( F \) of characteristic different from $2$ such that \( \dim_F D \leq 4 \), is studied in \cite{Pa_Bi_25}. It is shown that every element of \( \mathrm{SL}_n(D) \) can be expressed as a product of at most three square elements. As an application, let \( FG \) be the group algebra of a locally finite group \( G \) over a field \( F \) of characteristic \( p \neq 2 \). It is demonstrated that if either \( p > 2 \), \( F \) is algebraically closed, \( F \) is real-closed, or \( G \) is locally nilpotent, then every element in the derived subgroup \( \mathcal{U}(FG)' \) of the unit group $\mathcal{U}(FG)$ of $FG$ can be written as a product of at most three squares in $\mathcal{U}(FG)$. Furthermore, some of the results are also valid for twisted group algebras of locally finite groups.

	With these results  and the findings so far, we first show that for a division ring \( D \) with center, containing a real-closed field $R$ and satisfying $\dim_RD<\infty$, every matrix in \( \mathrm{SL}_n(D) \) can be written as a product of  two square elements in \( \mathrm{GL}_n(D) \). Extending this to twisted group algebras, under certain conditions, we then consider the case where \( F^\tau G \) is the twisted group algebra of a locally finite group \( G \) over a real-closed field \( F \) and prove that every element in the derived subgroup \( \mathcal{U}(F^\tau G)' \) of the unit group \( \mathcal{U}(F^\tau G) \) can also be expressed as a product of  two squares in \( \mathcal{U}(F^\tau G) \). This result strengthens a special case by reducing the bound from $3$ to $2$. Furthermore, it follows the spirit of \cite{Pa_Li_12}, which established that every element of every finite simple group is a product of two squares, including certain simple linear groups when \( D \) is a finite field.
	
	To state this precisely, we first recall some essential definitions and results. The notations \( \mathrm{GL}_n(D) \) and \( \mathrm{SL}_n(D) \) were introduced earlier in Subsection~\ref{notation} of Section~\ref{intro}. A \textit{locally finite} group is a group where every finitely generated subgroup is finite. We now turn to twisted group algebras.  
	
	Let $F$ be a field and let  $G$ be a group. Then a \textit{twisted group algebra} $F^\tau G$ of $G$ over $F$ is an associative ring which contains $F$ and has as an $F$-basis the set ${\overline G}$, a copy of $G$. Thus each element of $F ^\tau G$ is uniquely a finite sum $\sum_{x\in G} r_x{\bar x}$ with $r_x\in F$. Addition is as expected and multiplication is determined by the rule ${\bar x}{\bar y}=\tau(x,y) \overline{xy}$, where   $x,y\in G$ and  $\tau :G\times G\to F\setminus\{0\}$ is {called} the {\it twisting} function. The unit group of $F^\tau G$, consisting of all invertible elements of \( F^\tau G \), is denoted by \( \mathcal{U}(F^\tau G) \). We also use \( \mathcal{U}(F^\tau G)' \) to represent the subgroup of \( \mathcal{U}(F^\tau G) \) generated by the multiplicative commutators of \( \mathcal{U}(F^\tau G) \). By the multiplicative commutator of elements \( a \) and \( b \) in a group, we mean the element \( aba^{-1}b^{-1} \).

	Recall that a field $F$ is \textit{real-closed} if $F$ is not algebraically closed but the field extension $F(\sqrt{-1})$ is algebraically closed.  A division ring $D$ with center $F$ is called an \textit{ordinary quaternion division ring} if there exist $i,j,k\in D$ such that $D$ has the form $$D=\{a+bi+cj+dk\mid a,b,c,d\in F,i^2=j^2=k^2=-1,ij=-ji=k\}.$$ 
	
	With the information introduced, it is now time to present the following result.
	
	\begin{theorem}\label{real square}
		Let $D$ be a division ring with center, and let $n$ be a positive integer. If $D$ contains a real closed field $R$ such that $\dim_RD<\infty$, then every element  in $\mathrm{SL}_n(D)$ is a product of two square elements in  ${\rm GL}_n(D)$. 
	\end{theorem}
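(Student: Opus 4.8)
The plan is to first pin down the isomorphism type of $D$ and then handle each type in turn: the noncommutative cases will reduce to the product-of-two-diagonalizable-matrices results of Section~\ref{product}, while the commutative case will be settled by a square-root computation over $\mathbb{R}$ together with a transfer argument. Write $F$ for the center of $D$. Since $D$ is module-finite over the subfield $R$, it satisfies a polynomial identity, so by Kaplansky's theorem $D$ is finite-dimensional over $F$; in particular $\dim_F R<\infty$. Analyzing the compositum $RF$ inside $D$ and invoking the Artin--Schreier description of real-closed fields together with the classification of finite-dimensional central division algebras over real-closed fields (Brauer group $\mathbb{Z}/2$) and over algebraically closed fields, one checks that $D$ is isomorphic to exactly one of: (i) a real-closed field; (ii) an algebraically closed field, necessarily of characteristic zero; or (iii) an ordinary quaternion division ring over a real-closed field.

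In cases (ii) and (iii), every nonzero element of $D$ is a square: over an algebraically closed field of characteristic zero this is classical, and over an ordinary quaternion division ring over a real-closed field it follows from the polar form of a quaternion (its reduced norm lies in the nonnegative cone of the center, which admits square roots, and half-angles exist by real-closedness). Hence every diagonalizable matrix in $\mathrm{GL}_n(D)$ is a square, being similar to some $\mathrm{diag}(d_1,\dots,d_n)$ with $d_i=e_i^{2}$ and $e_i\neq 0$, so conjugate to $\mathrm{diag}(e_1,\dots,e_n)^{2}$. Since $F$ is infinite and, in case (iii), $D$ has characteristic different from $2$, Theorem~\ref{section diagonalizable} gives that every matrix in $\mathrm{M}_n(D)$ is a product of two diagonalizable matrices, and when the matrix is invertible both factors are automatically invertible. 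Therefore every element of $\mathrm{GL}_n(D)$, in particular every element of $\mathrm{SL}_n(D)$, is a product of two squares in $\mathrm{GL}_n(D)$.

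It remains to treat case (i), where $D=F$ is a real-closed field and, $F$ being infinite, $\mathrm{SL}_n(F)$ is the group of determinant-one matrices. For a fixed $n$ the statement ``every matrix of determinant $1$ is a product of two invertible squares'' is first-order in the language of rings, so by completeness of the theory of real-closed fields it suffices to prove it over $\mathbb{R}$. Given $A\in\mathrm{SL}_n(\mathbb{R})$, decompose $\mathbb{R}^{n}$ into the generalized eigenspaces of $A$ carrying the positive-real, negative-real, and non-real eigenvalues, so that $A$ is similar to $A_{>0}\oplus A_{<0}\oplus A_{\mathbb{C}}$. The blocks $A_{>0}$ and $A_{\mathbb{C}}$ have positive determinant (complex eigenvalues occurring in conjugate pairs), and $\det A=1$, so $\det A_{<0}>0$ and hence $A_{<0}$ has even size. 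Choose real square roots $B_1^{2}=A_{>0}$ and $B_3^{2}=A_{\mathbb{C}}$, which exist because $A_{>0}$ and $A_{\mathbb{C}}$ have no negative real eigenvalue (the classical criterion for existence of a real matrix square root), with $B_1$ of positive-real spectrum and $B_3$ of non-real spectrum. Then $A$ is similar to $MN$ where $M=B_1\oplus(-I)\oplus B_3$ and $N=B_1\oplus(-A_{<0})\oplus B_3$, since $MN=A_{>0}\oplus A_{<0}\oplus A_{\mathbb{C}}$. Now $N$ has no negative real eigenvalue, while the only negative real eigenvalue of $M$ is $-1$, occurring in an even number of $1\times 1$ Jordan blocks; hence both $M$ and $N$ possess real square roots, and conjugating shows $A$ is a product of two squares in $\mathrm{GL}_n(\mathbb{R})$. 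The two points that will require care are the structural reduction of the first step --- making sure a possibly non-central real-closed subfield still forces $D$ into one of the three listed types --- and the square-root bookkeeping just carried out; the noncommutative cases, by contrast, come out smoothly from Theorem~\ref{section diagonalizable} once one observes that over those $D$ every nonzero scalar is a square.
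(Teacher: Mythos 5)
Your argument is correct, but it follows a genuinely different route from the paper's in two places. First, the structural step: the paper simply quotes Gerstenhaber--Yang \cite{Pa_Ge_60} to conclude that $D$ is $R$, $R(\sqrt{-1})$, or an ordinary quaternion division ring over a real-closed center, whereas you re-derive this via Kaplansky's PI theorem, the Artin--Schreier theorem applied to the compositum $RF$, and the Brauer group of a real-closed field; this works (your line ``$\dim_F R<\infty$'' should read that $R$ spans a finite-dimensional $F$-subspace of $D$, or simply $\dim_F D<\infty$), but it amounts to reproving the cited classification, and the sketch would need to be fleshed out to be self-contained. Second, the commutative cases: the paper handles both field cases at once by citing \cite{Pa_SoDuHaBi_2021}, which writes any $A\in\mathrm{SL}_n(D)$ over such a field as a product of two commutators of involutions, each of which is visibly a square via $aba^{-1}b^{-1}=(ab)^2$; you instead dispose of the algebraically closed field through Theorem~\ref{section diagonalizable} (every nonzero scalar being a square), and settle the real-closed field by Tarski transfer to $\mathbb{R}$ followed by an explicit real-square-root construction ($A\sim MN$ with $M=B_1\oplus(-\mathrm{I})\oplus B_3$, $N=B_1\oplus(-A_{<0})\oplus B_3$, the parity of the negative block guaranteeing real square roots). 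Your route is more elementary and self-contained over $\mathbb{R}$, at the price of invoking completeness of the theory of real closed fields and the real square-root criterion, while the paper's route is shorter because it leans on the involution-commutator theorem; the quaternion case is handled identically in both (Theorem~\ref{section diagonalizable} plus the fact that every quaternion over a real-closed field is a square, which the paper gets from Niven--Jacobson \cite{Bo_La_91} and you get from a polar-form computation).
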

	
	\begin{proof}
		It is shown in \cite[Theorem]{Pa_Ge_60} that if \( D \) is a division ring containing a real-closed subfield \( R \) such that \( D \) has finite left dimension over \( R \), then \( D \) must be one of the following:  
		\begin{enumerate}[\rm (i)]
			\item \( D = R \);
			\item \( D = R(\sqrt{-1}) \);
			\item \( D \) is an ordinary quaternion division ring with  real-closed center \( R' \) satisfying \( R'(\sqrt{-1}) \cong R(\sqrt{-1}) \).
		\end{enumerate} If \( n = 1 \) and \( D = R \) or \( D = R(\sqrt{-1}) \), then \( \mathrm{SL}_1(D) = \{1\} \), so every element in \( \mathrm{SL}_1(D) \) is trivially a product of two square elements in \( \mathrm{GL}_1(D) \). If \( n = 1 \) and \( D \) is a quaternion division ring with real-closed center, then by \cite[(16.14) Theorem (Niven, Jacobson)]{Bo_La_91}, every element in \( D \) can be represented as a square. In particular, each element of \( \mathrm{SL}_1(D) \) is a square, and therefore, every element of \( \mathrm{SL}_1(D) \) can be expressed as a product of two square elements in \( \mathrm{GL}_1(D) \). By the way, by a similar argument to \cite[Lemma 2.1]{Pa_DuHaSo_24}, we can describe \( \mathrm{SL}_1(D) \) as the set $$ \{ a + bi + cj + dk \mid a, b, c, d \in R, a^2 + b^2 + c^2 + d^2 = 1 \}.$$ Thus, the proof for the case where \( n = 1 \) is complete. We now proceed to the case where \( n > 1 \).

		Let $A\in\mathrm{SL}_n(D)$ and we will now proceed by dividing the proof into two cases, as outlined above.
		
		\medskip
		
		\noindent\textbf{Case 1.} $D$ is a field. In this case,
		the characteristic of $D$ is $0$. Furthermore, it is shown in \cite[Theorem 1]{Pa_SoDuHaBi_2021} that $A$ is expressible as a product of two commutators of involutions in $\mathrm{GL}_n(D)$. One can easily verify that any commutator of involutions in \( \mathrm{GL}_n(D) \) is a square element in \( \mathrm{GL}_n(D) \), as it follows from the relation $$ aba^{-1}b^{-1} = abab = (ab)^2, $$ given that \( a^2 = b^2 = 1 \). Therefore, $A$ can be expressed as a product of two square elements in $\mathrm{GL}_n(D)$.
		
		\medskip

		\noindent\textbf{Case 2.} $D$ is an ordinary quaternion division ring with real-closed center $R'$. Taking Theorem~\ref{section diagonalizable} into account, it follows that $A$ can be expressed as a product of two diagonalizable matrices in $\mathrm{GL}_n(D)$. Furthermore, by \cite[(16.14) Theorem (Niven, Jacobson)]{Bo_La_91}, every element of $D$ is representable as a square element in $D$. Hence, $A$ can be expressed as a product of two square elements in $\mathrm{GL}_n(D)$.
		
		\medskip
		
		In conclusion, the proof follows from the two cases discussed above.
	\end{proof}
	
	Taking Theorem~\ref{real square} into account, we now turn our attention to verifying the following theorem concerning group algebras, thus bringing this section to a close.
	
	\begin{theorem}\label{twisted}
		Let $F^\tau G$ be the twisted group algebra of a locally finite group $G$ over a field $F$ of characteristic $0$. If $F$ is either algebraically closed or is real-closed, then every element in $\mathcal{U}(F^\tau G)'$ is a product of two square elements in $\mathcal{U}(F^\tau G)$.
	\end{theorem}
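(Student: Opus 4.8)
The plan is to reduce to a finite subgroup using local finiteness, decompose the resulting finite-dimensional (and, in characteristic $0$, semisimple) twisted group algebra via Wedderburn--Artin, and then apply Theorem~\ref{real square} to each matrix factor.

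\emph{Reduction to a finite group.} Let $u\in\mathcal{U}(F^\tau G)'$, so that $u=\prod_{j=1}^{\ell}a_jb_ja_j^{-1}b_j^{-1}$ for some $a_1,b_1,\dots,a_\ell,b_\ell\in\mathcal{U}(F^\tau G)$. Each of the finitely many elements $a_j,a_j^{-1},b_j,b_j^{-1}$ is a finite $F$-linear combination of elements of $\overline{G}$; let $S\subseteq G$ be the finite union of the corresponding supports and put $H=\langle S\rangle$. Then $H$ is finitely generated, hence finite since $G$ is locally finite, and $F^\tau H$, the $F$-span of $\overline{H}$ with $\tau$ restricted to $H\times H$, is a subalgebra of $F^\tau G$. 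By the choice of $S$ we have $a_j,a_j^{-1},b_j,b_j^{-1}\in F^\tau H$, so $a_j,b_j\in\mathcal{U}(F^\tau H)\subseteq\mathcal{U}(F^\tau G)$ and hence $u\in\mathcal{U}(F^\tau H)'$. Since every square in $\mathcal{U}(F^\tau H)$ is a square in $\mathcal{U}(F^\tau G)$, it suffices to prove the theorem for the finite group $H$.

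\emph{Structure of $F^\tau H$.} Because $\mathrm{char}\,F=0$ does not divide $|H|$, Maschke's theorem for twisted group algebras shows that $F^\tau H$ is a semisimple finite-dimensional $F$-algebra, so by Wedderburn--Artin $F^\tau H\cong\bigoplus_{i=1}^{r}\mathrm{M}_{n_i}(D_i)$, where each $D_i$ is a division ring with $F\subseteq Z(D_i)$ and $\dim_FD_i<\infty$. One then checks that each $D_i$ contains a real-closed subfield $R_i$ with $\dim_{R_i}D_i<\infty$. If $F$ is real-closed this is immediate with $R_i=F$. If $F$ is algebraically closed, then $Z(D_i)$, being a finite extension of $F$, equals $F$, and a finite-dimensional central division algebra over an algebraically closed field is trivial, so $D_i=F$; moreover an algebraically closed field of characteristic $0$ has the form $R(\sqrt{-1})$ for some real-closed subfield $R$ (take $R$ to be the real closure, inside $F$, of $\mathbb{Q}$ adjoined a transcendence basis of $F$), and then $R_i=R$ works with $\dim_{R_i}D_i=2$.

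\emph{Conclusion.} Under the identification $\mathcal{U}(F^\tau H)\cong\prod_{i=1}^{r}\mathrm{GL}_{n_i}(D_i)$, the commutator subgroup of a finite direct product is the direct product of the commutator subgroups, so $\mathcal{U}(F^\tau H)'\cong\prod_{i=1}^{r}\mathrm{GL}_{n_i}(D_i)'=\prod_{i=1}^{r}\mathrm{SL}_{n_i}(D_i)$ by the definition of $\mathrm{SL}_{n}(D)$ adopted in this paper. Writing the image of $u$ as $(u_1,\dots,u_r)$ with $u_i\in\mathrm{SL}_{n_i}(D_i)$, Theorem~\ref{real square} applied to each $\mathrm{M}_{n_i}(D_i)$ yields $s_i,t_i\in\mathrm{GL}_{n_i}(D_i)$ with $u_i=s_i^2t_i^2$, whence $u=(s_1,\dots,s_r)^2(t_1,\dots,t_r)^2$ is a product of two squares in $\prod_{i}\mathrm{GL}_{n_i}(D_i)=\mathcal{U}(F^\tau H)\subseteq\mathcal{U}(F^\tau G)$, completing the proof. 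The main obstacle is organizational rather than computational: one must set up the reduction to a finite subalgebra carefully so that semisimplicity and the Wedderburn--Artin decomposition become available, and then confirm that every division-ring component arising over a real-closed or algebraically closed $F$ genuinely satisfies the hypothesis of Theorem~\ref{real square} (the algebraically closed case being the only place where a nontrivial field-theoretic observation is needed); once this bookkeeping is in place, the result follows immediately.
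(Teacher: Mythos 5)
Your proposal is correct and follows essentially the same route as the paper: reduce to a finite subgroup $H$ via local finiteness, use semisimplicity of $F^\tau H$ in characteristic $0$ together with Wedderburn--Artin, and then apply Theorem~\ref{real square} componentwise to $\prod_i \mathrm{SL}_{n_i}(D_i)$. The only minor deviation is in the algebraically closed case, where you invoke the Artin--Schreier fact that $F=R(\sqrt{-1})$ for some real-closed subfield $R$ so as to apply Theorem~\ref{real square} as a black box, whereas the paper instead reuses Case~1 of that theorem's proof (the product-of-two-commutators-of-involutions result for fields of characteristic $0$); both arguments are valid.
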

	
	\begin{proof}
		Assume that $$\alpha=(\alpha_1\beta_1\alpha_1^{-1}\beta_1^{-1})\cdots(\alpha_k\beta_k\alpha_k^{-1}\beta_k^{-1})\in\mathcal{U}(F^\tau G)',$$ where $\alpha_i,\beta_i\in \mathcal{U}(F^\tau G)$. Let $H$ be a subgroup of $G$ generated by the supports of all the $\alpha_i,\beta_i,\alpha_i^{-1}$ and  $\beta_i^{-1}$. Since $G$ is a locally finite group, it follows that $H$ is a finite group and  $\alpha\in \mathcal{U}(F^\tau H)'$. 
		
		If \( F \) has characteristic 0, then by \cite[Theorem~4.2]{Bo_Pa_89}, it follows that \( F^\tau H \) is a semisimple \( F \)-algebra. It is well known from  the Wedderburn-Artin theorem that
		\[
		F^\tau H \cong \mathrm{M}_{n_1}(D_1) \times \dots \times \mathrm{M}_{n_t}(D_t),
		\]
		where \( t, n_1, \dots, n_t \) are positive integers, and \( D_1, \dots, D_t \) are division rings that are finite-dimensional over \( F \).
		
		If \( F \) is algebraically closed, then by \cite[Proposition 1.6]{Bo_Bre_14}, we conclude that $$ D_1 = \cdots = D_t = F. $$ Following Case 1 in the proof of Theorem~\ref{real square}, it is shown that every matrix over \( F \) can be expressed as a product of two square elements. Therefore, the same holds for \( \alpha \in \mathcal{U}(F^\tau H)' \).
		
		In the case where \( F \) is a real-closed field, the proof follows in a similar manner to the algebraically closed case. However, here we directly apply Theorem~\ref{real square} without referring to Case 1 of its proof, which allows us to complete the argument, as promised.
	\end{proof}
	
	\subsection{Algebraically closed division rings}\label{algebraically}
	
	In the preceding subsubsection, our focus was on radicable division rings. Here, we examine a particular class of such rings. Recall that a division ring \( D \) is said to be radicable if every equation of the form \( x^k - a = 0 \), where \( a \in D \) and \( k \) is a positive integer, has a solution in \( D \). A stronger condition is that every polynomial equation in a \textbf{central} variable with coefficients in \( D \) admits a right (or left) root in \( D \), meaning that \( D \) is right (or left) algebraically closed. Specifically, a polynomial equation in the \textbf{central} variable \( x \) takes the form  
	\[
	a_t x^t + a_{t-1} x^{t-1} + \cdots + a_1 x + a_0 = 0,
	\]  
	where \( t \) is a positive integer, \( a_t, a_{t-1}, \dots, a_1, a_0 \in D \) with \( a_t \neq 0 \). It is important to emphasize that the nonconstant polynomial  
	\[
	a_t x^t + a_{t-1} x^{t-1} + \cdots + a_1 x + a_0
	\]  
	can also be expressed as  
	\[
	x^t a_t + x^{t-1} a_{t-1} + \cdots + x a_1 + a_0.
	\]  
	However, evaluating these expressions at a given element may yield different results. Accordingly, an element \( a \in D \) is called a \textit{right} (respectively, \textit{left}) \textit{root} of the polynomial if  
	\[
	a_t a^t + a_{t-1} a^{t-1} + \cdots + a_1 a + a_0 = 0 \quad \text{(resp. } a^t a_t + a^{t-1} a_{t-1} + \cdots + a a_1 + a_0 = 0\text{).}
	\]  
	Since we primarily consider right roots in this text, we shall often omit the qualifier ``right" and simply refer to them as roots.  The case of  non-central variables will be considered later.
	
	In the finite-dimensional setting, the notions of ``left algebraically closed" and ``right algebraically closed" coincide, and algebraically closed division rings admit a well-known classification. Specifically, if \( D \) is a  division ring of finite-dimensional over its center, it follows from \cite[(16.16) Theorem]{Bo_La_91} that if \( D \) is algebraically closed, then precisely one of the following holds:  
	\begin{enumerate}[\rm (i)]
		\item \( D \) is an algebraically closed field.
		\item The center  of \( D \) is a real-closed field, and \( D \) is an ordinary quaternion division ring.
	\end{enumerate} Hence, the classification of these division rings is now fully determined, having been reduced to the classification of real-closed fields.  However, the structure of algebraically closed division rings of infinite dimensional over their centers remains less well understood. 
	
	As shown in Subsubsection~\ref{radicable}, we have provided an example of a radicable division ring whose center is a finite field. Using similar reasoning, we can construct an example demonstrating the existence of an algebraically closed division ring with finite center. Notably, such a division ring must be of infinite dimension over its center. Hence, in some subsequent results, it will be necessary to assume that the center of an algebraically closed division ring is infinite.

	As mentioned earlier, we primarily consider polynomials over a field. By applying the definition of algebraically closed division rings, one can readily verify the following theorem. Although it has been established in \cite{Pa_Pa_25}, we present it here for the reader’s convenience, as the result has not yet been published.  
	
	\begin{theorem}  \label{central x}
		Let \( D \) be an algebraically closed division ring with center \( F \). If \( p \) is a nonconstant polynomial in the central variable \( x \) with coefficients in \( F \), then the image of \( p \) on \( D \) coincides with \( D \), that is, \( p(D) = D \).  
	\end{theorem}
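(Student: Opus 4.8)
The plan is to reduce the statement to a single polynomial equation in the central variable that is guaranteed to have a root by the defining property of an algebraically closed division ring. Fix $d \in D$; we must find $a \in D$ with $p(a) = d$. Write $p(x) = c_t x^t + c_{t-1} x^{t-1} + \cdots + c_1 x + c_0$ with $c_i \in F$ and $c_t \neq 0$, $t \geq 1$. The first step is the trivial observation that $p(x) - d$ is itself a polynomial in the central variable $x$, now with coefficients $c_t, c_{t-1}, \ldots, c_1, c_0 - d$ lying in $D$ (the top coefficient $c_t$ is still nonzero), so it is a genuine nonconstant polynomial equation over $D$ in Jacobson's sense. Since $D$ is algebraically closed, this equation has a (right) root $a \in D$, i.e.\ $c_t a^t + \cdots + c_1 a + (c_0 - d) = 0$, which rearranges to $p(a) = d$. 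As $d$ was arbitrary, $p(D) = D$.

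The only subtlety worth spelling out — and I would expect this to be the main point a careful referee would want addressed — is the evaluation convention. For a polynomial with coefficients in a noncommutative $D$, left-sided and right-sided evaluations generally differ, as the excerpt itself stresses; but here the coefficients $c_i$ of $p$ come from the center $F$, so $c_i a^i = a^i c_i$ and there is no ambiguity in the terms coming from $p$ itself. The constant term $c_0 - d$ of $p(x) - d$ need not be central, but a constant term contributes the same way under either evaluation convention, so the equation $c_t x^t + \cdots + c_1 x + (c_0 - d) = 0$ is unambiguously a left-sided (equivalently here, two-sided on the nonconstant part) polynomial equation over $D$, and ``algebraically closed'' delivers a root regardless of which one-sided convention one has fixed. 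Thus no appeal to the finite-dimensional classification, to the infiniteness of $F$, or to any structure theory is needed; the result is essentially immediate from unwinding the definitions, which is why the authors describe it as ``readily verified.''

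Finally, I would remark that $t \geq 1$ (nonconstancy) is exactly what is needed: it guarantees $p(x) - d$ is still nonconstant so that the algebraic-closure hypothesis applies, and it is the hypothesis ensuring surjectivity can even be hoped for, since a constant polynomial obviously has image a single point.
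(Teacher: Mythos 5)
Your proposal is correct and takes essentially the same route as the paper: both fix $d\in D$, observe that $p-d$ is still a nonconstant polynomial in the central variable with coefficients in $D$ (only the constant term possibly leaving $F$), and invoke the definition of an algebraically closed division ring to produce a root $a$ with $p(a)=d$. Your explicit check that centrality of the $c_i$ (and the convention-independence of the constant term) removes any left/right evaluation ambiguity is a sound refinement; the paper instead adds side remarks on the infiniteness of $D$ and the non-vanishing of polynomials, which are not needed for the core argument.
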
  
	
	\begin{proof}  
		Let \( p \) be a nonconstant polynomial in the central variable \( x \) with coefficients in \( F \). We need to show that every element \( d \in D \) belongs to \( p(D) \). If \( D \) is a field, then it  is obviously an algebraically closed field. It is well known that any algebraically closed field is infinite. Moreover, by Theorem~\ref{Wedderburn}, every finite division ring is a field. Consequently, \( D \) must be infinite.  
		
		Furthermore, it follows from \cite[(16.7) Theorem, p. 251]{Bo_La_91} that no nonzero polynomial in the central variable \( x \) with coefficients in \( D \) can vanish identically on \( D \). Now, consider the polynomial \( p - d \), which remains a nonconstant polynomial in the central variable \( x \) with coefficients in \( D \), though not necessarily in \( F \). Since \( D \) is algebraically closed, this polynomial must have a root in \( D \), say \( a \), satisfying \( p(a) = d \). This completes the proof.  
	\end{proof}  
	
	Using Theorem~\ref{central x}, we will prove the result concerning polynomials in noncommuting variables, as stated below. The subsequent theorem has been established in \cite{Pa_Pa_25}, but, as it has not yet been published, we present it here for the reader’s convenience, for the same reason as previously mentioned in Theorem~\ref{central x}. The proof is derived from \cite{Pa_Wa_21}, with some modifications. Moreover, the following theorem serves as a generalization of Theorem~\ref{central x}.
	
	\begin{theorem}\label{al}
		Let $D$ be an algebraically closed division ring with center $F$ and let $p$ be a polynomial in non-commuting variables $x_1,\ldots,x_m$ with coefficients in $F$, where $m$ is a positive integer, such that $p$ has zero constant term. If $p(F)\neq\{0\}$, then $p(D)=D$.
	\end{theorem}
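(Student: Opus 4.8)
The plan is to deduce the statement from the single-variable case established in Theorem~\ref{central x}, using the hypothesis $p(F)\neq\{0\}$ to collapse the $m$ noncommuting variables into one \emph{central} variable by substituting central scalars. Since $p(F)\neq\{0\}$, I would first fix $c_1,\dots,c_m\in F$ with $p(c_1,\dots,c_m)\neq 0$ and set
\[
q(x)=p(c_1x,\,c_2x,\,\dots,\,c_mx),
\]
where $x$ is a central variable. Because each $c_i$ and $x$ are central, every monomial $\alpha\,x_{i_1}x_{i_2}\cdots x_{i_k}$ of $p$ (with $\alpha\in F$) is carried to $\alpha\,c_{i_1}c_{i_2}\cdots c_{i_k}\,x^k$, so $q$ is genuinely a polynomial in the central variable $x$ with coefficients in $F$ and is therefore eligible for Theorem~\ref{central x}.

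The next step is to verify that $q$ is nonconstant: since $p$ has zero constant term, $q(0)=p(0,\dots,0)=0$, whereas $q(1)=p(c_1,\dots,c_m)\neq 0$, and no constant polynomial takes both values. Applying Theorem~\ref{central x} to $q$ then gives $q(D)=D$. Finally, the same centrality that justifies the substitution also gives $q(a)=p(c_1a,c_2a,\dots,c_ma)$ for every $a\in D$, because the $c_i$ commute with $a$ and with one another, so each monomial evaluates to the same element on both sides. Consequently, for an arbitrary $d\in D$ one picks $a\in D$ with $q(a)=d$ and puts $a_i=c_ia\in D$, obtaining $p(a_1,\dots,a_m)=d$; hence $p(D)=D$.

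Given Theorem~\ref{central x}, the argument is short and I do not anticipate a serious obstacle; the single point deserving care is the role of the hypothesis $p(F)\neq\{0\}$. It is precisely what furnishes a point $c\in F^m$---lying in the center, not merely in $D^m$---at which $p$ does not vanish, and it is the centrality of $c$ that both keeps $q$ inside $F[x]$ and makes the formal substitution agree with evaluation at elements of $D$. Dropping this hypothesis breaks the reduction, which is consistent with the much weaker conclusion---a linear combination of five values of $p$---that the paper records later for the case $p(F)=\{0\}$.
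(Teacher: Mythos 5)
Your proof is correct, and it follows the same overall strategy as the paper: use a point $(c_1,\dots,c_m)\in F^m$ at which $p$ does not vanish to specialize $p$ to a nonconstant polynomial in one central variable with coefficients in $F$, then invoke Theorem~\ref{central x}. The difference is in the specialization. The paper freezes all variables but one, setting $f(x)=p(x,a_2,\dots,a_m)$, and notes that $f(a_1)\neq0$, so $f$ is a nonzero polynomial; you instead restrict $p$ to the central line through $(c_1,\dots,c_m)$ via $q(x)=p(c_1x,\dots,c_mx)$. Your version buys a cleaner nonconstancy check: since $p$ has zero constant term, $q(0)=0$, while $q(1)=p(c_1,\dots,c_m)\neq0$, so $q$ has zero constant term and some nonzero coefficient in positive degree, and Theorem~\ref{central x} applies immediately. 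With the paper's substitution this point needs more care, since $f\neq0$ does not by itself give $f$ nonconstant: for an admissible but unlucky choice of the frozen values (for instance $p=x_1x_2-x_1+x_2$ with $a_2=1$, where $f(x)\equiv1$) the specialized polynomial is a nonzero constant, so the paper's argument implicitly requires the frozen scalars to be chosen suitably, an issue your dilation substitution avoids altogether. Your final step is also sound: because the $c_i$ lie in $F$, they commute with any $a\in D$ and with each other, so $p(c_1a,\dots,c_ma)=q(a)$, and writing $d=q(a)$ with $a_i=c_ia$ indeed exhibits $d\in p(D)$.
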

	
	\begin{proof}
		The case where \( m=1 \) is exactly addressed by Theorem~\ref{central x}. Therefore, we now focus on the case where \( m \geq 2 \).
		
		Let \( p = p(x_1, \ldots, x_m) \) be a polynomial in \( m \) variables, where \( m \geq 2 \). Since the polynomial \( p \) has zero constant term, we immediately have
		\[
		p(0, \ldots, 0) = 0.
		\]
		Now, assume that \( p(F) \neq \{0\} \), meaning there exist elements \( a_1, \ldots, a_m \in F \) such that \( p(a_1, \ldots, a_m) \neq 0 \). If all the degrees of \( x_1, \ldots, x_m \) were zero, we would face the contradiction
		\[
		0 \neq p(a_1, \ldots, a_m) = p(0, \ldots, 0) = 0.
		\] This implies that there must be at least one index \( i \in \{1, \ldots, m\} \) for which the degree of \( x_i \) in \( p \) is nonzero. 
		
		Without loss of generality, assume the degree of \( x_1 \) in \( p \) is nonzero. Define the polynomial \( f(x) = p(x, a_2, \ldots, a_m) \), which is in the central variable \( x \). Since $$ f(a_1) = p(a_1, a_2, \ldots, a_m) \neq 0, $$ it follows that \( f(x) \) is a nonzero polynomial. For any \( c \in D \), following the proof of Theorem~\ref{central x}, we know that \( f(x) - c \) remains a nonconstant polynomial in the variable \( x \) with coefficients in \( D \). Given that \( D \) is algebraically closed, Theorem~\ref{central x} tells us that \( f(D) = D \), and consequently, \( p(D) = D \). This concludes the proof.
	\end{proof}

	By Theorem~\ref{al}, it follows that \( p(D) = D \), where \( p \) is a polynomial in non-commuting variables \( x_1, \dots, x_m \) with coefficients in \( F \), and \( m \) is a positive integer. Since \( p \) has zero constant term, it follows that the image of \( p \), evaluated on matrices over \( D \), must contain all diagonalizable matrices over \( D \). This can be seen by noting that the image of \( p \) is invariant under similarity transformations, as the coefficients lie in the center \( F \) of \( D \). Therefore, it suffices to show that the image of \( p \) includes all diagonal matrices over \( D \). We now proceed this as follows. Let \( A = \mathrm{diag}(a_1, a_2, \dots, a_n) \) be a diagonal matrix, where \( n \) is a positive integer and \( a_1, a_2, \dots, a_n \in D \). Since \( p(D) = D \), it follows that each \( a_i \) can be written as $$a_i = p(b_{1,i}, \dots, b_{m,i}),$$ for some \( b_{1,i}, \dots, b_{m,i} \in D \). Let \( B_i = \mathrm{diag}(b_{i,1}, b_{i,2}, \dots, b_{i,m}) \) for \( i \in \{1, 2, \dots, n\} \). It follows that \( A = p(B_1, \dots, B_m) \), as desired. Combining the claim with Theorem~\ref{section diagonalizable}, we derive the following theorem.
	
	\begin{theorem}\label{al1}
		Let $D$ be an algebraically closed division ring with infinite center $F$ and let $n$ be a positive integer. If $p$ is a polynomial in non-commuting variables $x_1,\ldots,x_m$ with coefficients in $F$, where $m$ is a positive integer, such that $p$ has zero constant term such that $p(F)\neq\{0\}$, then  every matrix  in $\mathrm{M}_n(D)$ can be expressed as a sum, or a product, or a linear combination of two elements from $p(\mathrm{M}_n(D))$, where the two scalars chosen arbitrarily in $F\setminus\{0\}$.
	\end{theorem}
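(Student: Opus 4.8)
The plan is to deduce Theorem~\ref{al1} by combining the discussion immediately preceding the statement --- which, via Theorem~\ref{al}, shows that $p(\mathrm{M}_n(D))$ contains every diagonalizable matrix in $\mathrm{M}_n(D)$ --- with the summary Theorem~\ref{section diagonalizable}. So the first step is simply to record the key inclusion: since $p$ has zero constant term and $p(F)\neq\{0\}$, Theorem~\ref{al} gives $p(D)=D$, and then, reasoning exactly as in the paragraph before the statement (every diagonal matrix lies in $p(\mathrm{M}_n(D))$ because $p(D)=D$ applied entrywise, and $p(\mathrm{M}_n(D))$ is invariant under similarity since the coefficients of $p$ are central), one obtains
\[
\{\,\text{diagonalizable matrices in }\mathrm{M}_n(D)\,\}\subseteq p(\mathrm{M}_n(D)).
\]

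Next, since the center $F$ is infinite it in particular contains at least three elements and ``sufficiently many elements'', so Theorem~\ref{section diagonalizable}(i)(a) and (iii)(d) both apply: every $A\in\mathrm{M}_n(D)$ can be written simultaneously as $A=B_1+B_2$ and as $A=C_1C_2$ with $B_1,B_2,C_1,C_2$ diagonalizable matrices in $\mathrm{M}_n(D)$. By the inclusion above, each of $B_1,B_2,C_1,C_2$ already belongs to $p(\mathrm{M}_n(D))$, and this immediately yields the ``sum'' and ``product'' assertions of the theorem.

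For the linear-combination assertion, fix nonzero scalars $\lambda,\mu\in F$ and start again from a decomposition $A=B_1+B_2$ into diagonalizable matrices. I would use the elementary fact that a central scalar multiple of a diagonalizable matrix is again diagonalizable: if $B=P^{-1}\mathrm{diag}(d_1,\dots,d_n)P$, then $\lambda^{-1}B=P^{-1}\mathrm{diag}(\lambda^{-1}d_1,\dots,\lambda^{-1}d_n)P$ because $\lambda^{-1}\in F$ commutes with $P$ and $P^{-1}$. Hence $\lambda^{-1}B_1$ and $\mu^{-1}B_2$ are diagonalizable, so they lie in $p(\mathrm{M}_n(D))$ by the inclusion, and
\[
A=\lambda\bigl(\lambda^{-1}B_1\bigr)+\mu\bigl(\mu^{-1}B_2\bigr)
\]
exhibits $A$ as a linear combination of two elements of $p(\mathrm{M}_n(D))$ with the prescribed coefficients $\lambda,\mu$.

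I do not expect a serious obstacle: the substantive content is already carried by Theorem~\ref{al} and Theorem~\ref{section diagonalizable}. The only point requiring a little care is that $p(\mathrm{M}_n(D))$ need not itself be closed under central scaling (one cannot in general rescale a representation $A=p(B_1,\dots,B_m)$ directly unless $p$ is homogeneous), which is precisely why the linear-combination argument must route through the stronger fact that $p(\mathrm{M}_n(D))$ contains \emph{all} diagonalizable matrices rather than through any stability property of the image.
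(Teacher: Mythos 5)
Your proposal is correct and follows essentially the same route as the paper: establish that $p(\mathrm{M}_n(D))$ contains all diagonalizable matrices (via $p(D)=D$ from Theorem~\ref{al}, applied entrywise to diagonal matrices, together with similarity invariance of the image), and then invoke Theorem~\ref{section diagonalizable} with the infinite center guaranteeing both the sum and product decompositions into two diagonalizable matrices. Your handling of the linear-combination case, absorbing the central scalars $\lambda^{-1},\mu^{-1}$ into the diagonalizable summands, is exactly the (implicit) argument the paper relies on as well.
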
  
	
	Theorem~\ref{al1} also relates to certain results from the work of M. Brešar and P.~Šemrl \cite{Pa_Bre_23,Pa_Bre_23_1}, but in the context of non-commutative settings, as viewed through the lens of the modified assumptions in Theorem~\ref{al1}. We now conclude this subsubsection by presenting the following theorem, derived from \cite[Corollary~4.5]{Pa_Bre_23}. We say two polynomials $f$ and $g$ in non-commuting variables  are  \textit{cyclically equivalent} if the polynomial $f-g$ can be written as a sum of commutators.
	
	\begin{theorem}\label{allll}
		Let $D$ be an algebraically closed division ring with center $F$ of characteristic $0$ such that $\dim_FD<\infty$ and let $n\geq2$ be a positive integer. If $p$ is a polynomial in non-commuting variables $x_1,\ldots,x_m$, where $m$ is a positive integer, which is not cyclically equivalent to an identity of $\mathrm{M}_n(F)$, then every matrix in $\mathrm{M}_n(D)$ can be expressed as a linear combination of five elements from $p(\mathrm{M}_n(D))$.
	\end{theorem}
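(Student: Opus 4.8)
The plan is to use the classification of finite-dimensional algebraically closed division rings to control the center of $\mathrm{M}_n(D)$, and then to invoke \cite[Corollary~4.5]{Pa_Bre_23}. By \cite[(16.16) Theorem]{Bo_La_91}, since $D$ is algebraically closed with $\dim_F D<\infty$, exactly one of the following occurs: either $D=F$ is an algebraically closed field, or $D$ is an ordinary quaternion division ring whose center $F$ is real-closed; in both cases $\operatorname{char}F=0$ is assumed. In either case $\mathrm{M}_n(D)$ is a finite-dimensional central simple $F$-algebra whose center $F\mathrm{I}_n$ is a copy of an algebraically closed field or of a real-closed field of characteristic zero. So the proof reduces, in each of these two cases, to verifying the hypotheses of \cite[Corollary~4.5]{Pa_Bre_23} for $\mathrm{M}_n(D)$ and then quoting it to write an arbitrary $M\in\mathrm{M}_n(D)$ as a linear combination of five values of $p$ on $\mathrm{M}_n(D)$.

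The one hypothesis that needs an argument is that $p$ is not cyclically equivalent to a polynomial identity of $\mathrm{M}_n(D)$, rather than merely of $\mathrm{M}_n(F)$. For this I would use the unital embedding $\mathrm{M}_n(F)\hookrightarrow\mathrm{M}_n(D)$: any polynomial identity of $\mathrm{M}_n(D)$ restricts to one of $\mathrm{M}_n(F)$, and since cyclic equivalence is a property of the formal difference $p-q$ in the free algebra, independent of the ring, it follows that if $p$ were cyclically equivalent to an identity of $\mathrm{M}_n(D)$, it would be cyclically equivalent to an identity of $\mathrm{M}_n(F)$, contrary to assumption. With this in hand, the case $D=F$ is immediate: the hypotheses of \cite[Corollary~4.5]{Pa_Bre_23} hold verbatim ($F$ algebraically closed of characteristic zero, $n\ge2$, $p$ not cyclically equivalent to an identity of $\mathrm{M}_n(F)$), and the corollary gives the conclusion. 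In the quaternion case one applies the corollary to the central simple $F$-algebra $\mathrm{M}_n(D)$ over the real-closed field $F$, the center being of the type required by the classification.

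I expect the only genuine obstacle to be the quaternion case, and specifically whether \cite[Corollary~4.5]{Pa_Bre_23} is really available over the non-split algebra $\mathrm{M}_n(D)$ or only over matrix algebras $\mathrm{M}_N(K)$ with $K$ a field. If only the split form is available, one must descend along $\overline F=F(\sqrt{-1})$: since $\overline F$ is algebraically closed of characteristic zero and splits $D$, one has $\mathrm{M}_n(D)\otimes_F\overline F\cong\mathrm{M}_{2n}(\overline F)$, with $\mathrm{M}_n(D)$ the fixed ring of the $\overline F/F$-semilinear ring automorphism $\tau=1\otimes\sigma$, $\operatorname{Gal}(\overline F/F)=\langle\sigma\rangle$. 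One checks (as above, via $A\mapsto\operatorname{diag}(A,A)$) that $p$ is not cyclically equivalent to an identity of $\mathrm{M}_{2n}(\overline F)$ either, applies the split form over $\overline F$ to a $\tau$-fixed $M$, and then tries to push the decomposition down to $\mathrm{M}_n(D)$ using that $p$ has coefficients in the $\tau$-fixed field $F$. The difficulty is that a naive Galois average over $\langle\sigma\rangle$ replaces each value $p(\vec A_j)$ by the pair $p(\vec A_j),\tau(p(\vec A_j))$ and doubles (indeed quadruples, once $\overline F$-scalars are split into real and imaginary parts) the number of summands, so recovering the bound five requires exploiting the quaternionic structure more carefully; this is where the real work lies. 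If, on the other hand, \cite[Corollary~4.5]{Pa_Bre_23} is stated for finite-dimensional central simple algebras over algebraically closed or real-closed fields of characteristic zero, then no descent is needed and the proof ends with the preceding paragraph.
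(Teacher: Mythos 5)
Your classification step and the case where $D=F$ is an algebraically closed field coincide with the paper's argument, and your remark that the cyclic-equivalence hypothesis transfers between $\mathrm{M}_n(F)$ and larger coefficient rings is fine (the paper only needs it for $\mathrm{M}_n(F(\sqrt{-1}))$, where it is immediate because $\mathrm{M}_n(F)\subseteq \mathrm{M}_n(F(\sqrt{-1}))$, so an identity of the latter restricts to one of the former). The genuine gap is exactly where you suspect it: the quaternion case. \cite[Corollary~4.5]{Pa_Bre_23} is a statement about matrix algebras over an algebraically closed field of characteristic zero, not about central simple algebras over a real-closed field, so your primary branch (``apply it verbatim to $\mathrm{M}_n(D)$ over $F$'') is not available; and your fallback branch, Galois descent from $\mathrm{M}_n(D)\otimes_F F(\sqrt{-1})\cong \mathrm{M}_{2n}(F(\sqrt{-1}))$, is left unfinished by your own admission --- the averaging destroys the bound of five, and you do not recover it. As written, the proposal therefore does not prove the theorem when $D$ is an ordinary quaternion division ring.

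The paper avoids descent altogether. In the quaternion case it uses the Jordan canonical form for quaternionic matrices (cf.\ the proof of \cite[Proposition~3.7]{Pa_Fa_18}): every $A\in\mathrm{M}_n(D)$ is $\mathrm{GL}_n(D)$-conjugate to $\bigoplus_{i=1}^{s}J_{m_i}(\alpha_i)$ with all $\alpha_i\in F(\sqrt{-1})$, hence to a matrix lying in $\mathrm{M}_n(F(\sqrt{-1}))\subseteq\mathrm{M}_n(D)$. Since $F(\sqrt{-1})$ is an algebraically closed field of characteristic zero, \cite[Corollary~4.5]{Pa_Bre_23} applies in the same size $n$ (not $2n$) and expresses this Jordan form as a linear combination of five values of $p$ on $\mathrm{M}_n(F(\sqrt{-1}))\subseteq\mathrm{M}_n(D)$; conjugating back, and using that the image of $p$ is invariant under similarity, yields the statement for $A$ itself. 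This per-matrix reduction to the maximal subfield $F(\sqrt{-1})$ is the idea missing from your proposal, and it is precisely what lets the bound five survive the passage from the split case to the quaternionic one.
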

	
	\begin{proof}
		Since \( D \) is a  division ring of finite-dimensional over its center, it follows from \cite[(16.16) Theorem]{Bo_La_91} that if \( D \) is algebraically closed, then precisely one of the following holds:  
		\begin{enumerate}[\rm (i)]
			\item \( D \) is an algebraically closed field.
			\item The center $F$ of \( D \) is a real-closed field, and \( D \) is an ordinary quaternion division ring.
		\end{enumerate} The first case where \( D \) is an algebraically closed field is done by \cite[Corollary~4.5]{Pa_Bre_23}. Now, we turn our attention to the latter case. Following the proof of \cite[Proposition~3.7]{Pa_Fa_18}, we know that \( A \) has a Jordan normal form, meaning that there exists \( P \in \mathrm{GL}_n(D) \) such that  
		\[
		P^{-1} A P = \bigoplus_{i=1}^s J_{m_i}(\alpha_i),  
		\]  
		for some positive integers \( s, m_1, m_2, \ldots, m_s \), and scalars \( \alpha_1, \alpha_2, \ldots, \alpha_s \in F(\sqrt{-1}) \), where \( m_1 + m_2 + \cdots + m_s = n \). Here,  
		\[
		J_{m_i}(\alpha_i) = \begin{pmatrix} 
			\alpha_i & 1 & 0 & \cdots & 0 & 0 & 0 \\ 
			0 & \alpha_i & 1 & \cdots & 0 & 0 & 0 \\ 
			0 & 0 & \alpha_i & \cdots & 0 & 0 & 0 \\ 
			\vdots & \vdots & \vdots & \ddots & \vdots & \vdots & \vdots \\ 
			0 & 0 & 0 & \cdots & \alpha_i & 1 & 0 \\ 
			0 & 0 & 0 & \cdots & 0 & \alpha_i & 1 \\ 
			0 & 0 & 0 & \cdots & 0 & 0 & \alpha_i
		\end{pmatrix} \in \mathrm{M}_{m_i}(D).
		\]  
		Since the image of a polynomial is invariant under similarity transformations, it suffices to verify the conclusion for each \(\displaystyle \bigoplus_{i=1}^s J_{m_i}(\alpha_i) \). Note that $F(\sqrt{-1})$ is an algebrically closed field. Hence, taking \cite[Corollary~4.5]{Pa_Bre_23} into account, the proof is completed.
	\end{proof}

	\bibliographystyle{amsplain}

\end{document}